\definecolor{darkred}{RGB}{100,0,0}
\definecolor{darkgreen}{RGB}{0,100,0}
\definecolor{darkblue}{RGB}{0,0,150}
\newtheorem{thm}{Theorem}
\newtheorem{lem}{Lemma}
\theoremstyle{remark}
\newtheorem{rem}{Remark}
\def\beq{\begin{equation}} 
\def\eeq{\end{equation}}
\def\beqn{\begin{eqnarray*}}
\def\eeqn{\end{eqnarray*}}
\def\Bitem{\begin{itemize}\setlength{\itemsep}{.2in}}
\def\bitem{\begin{itemize}\setlength{\itemsep}{.05in}}
\def\eitem{\end{itemize}}
\def\Benum{\begin{enumerate}\setlength{\itemsep}{.2in}}
\def\benum{\begin{enumerate}\setlength{\itemsep}{.05in}}
\def\eenum{\end{enumerate}}
\def\bmult{\begin{multline*}}
\def\emult{\end{multline*}}
\def\bcenter{\begin{center}}
\def\ecenter{\end{center}}
\def\bframe{\begin{frame}}
\def\eframe{\end{frame}}
\newcommand{\thmref}[1]{Theorem~\ref{thm:#1}}
\newcommand{\secref}[1]{Section~\ref{sec:#1}}
\def\cJ{\mathcal{J}}
\def\cL{\mathcal{L}}
\def\cQ{\mathcal{Q}}
\def\bbI{\mathbb{I}}
\def\bbJ{\mathbb{J}}
\def\bbN{\mathbb{N}}
\def\bbR{\mathbb{R}}
\def\bbT{\mathbb{T}}
\def\bbZ{\mathbb{Z}}
\newcommand{\E}{\operatorname{\mathbb{E}}}
\renewcommand{\P}{\operatorname{\mathbb{P}}}
\def\eps{\varepsilon}
\def\comp{\mathsf{c}}
\def\1{\mathbbm{1}}
\def\uj{U_{(j)}}
\def\ui{U_{(i)}}
\def\buj{\bar{U}_{(j)}}
\def\bui{\bar{U}_{(i)}}
\definecolor{purple}{rgb}{0.4,.1,.9}
\newcommand\blfootnote[1]{%
  \begingroup
  \renewcommand\thefootnote{}\footnote{#1}%
  \addtocounter{footnote}{-1}%
  \endgroup
}
\begin{document}
\thispagestyle{empty}

\title{\sc{On the Asymptotic Distribution of the Scan Statistic for Empirical Distributions}}
\author{Andrew Ying \and Wen-Xin Zhou}
\date{}
\maketitle
\blfootnote{All authors are with the Department of Mathematics, University of California, San Diego, USA.  Contact information is available \href{http://www.math.ucsd.edu/\~anying}{here} and \href{http://www.math.ucsd.edu/\~wez243}{here}.}
\begin{abstract}
We investigate the asymptotic behavior of several variants of the scan statistic applied to empirical distributions, which can be applied to detect the presence of an anomalous interval with any length. 
Of particular interest is Studentized scan statistic that is preferable in practice.
The main ingredients in the proof are Kolmogorov's theorem, a Poisson approximation, and recent technical results by \citet{kabluchko2014limiting}.
\end{abstract}








\section{Introduction}
The study of the scan statistics dates back\footnote{ Naus himself cites even earlier work in the 1940's by \citet{silberstein1945xliii}, \citet{berg1945xliv}, and \citet{mack1948xc}.} to \citet{naus1965distribution}, who derived the probability that an interval of a certain length contains a certain fraction of independent and identically distributed (iid) samples from the uniform distribution on $[0,1]$. Specifically, let $U_1, \dots, U_n$ be iid random variables from Unif$(0,1)$ with empirical cumulative distribution function (CDF) denoted by $F_n$, and let $h$ be the length of the underlying interval of interest. \citet{naus1965distribution} studied the distribution of
\beq\label{scan0}
\sup_{0 \le a \le 1} F_n(a+h) - F_n(a).
\eeq
The scan statistics of the uniform empirical distributions can be used to detect elevated signal relative to any continuous null distribution, after an appropriate inverse CDF transformation. Knowing this distribution \eqref{scan0} is essential to calibrating the scan statistic in the context of detecting, in a uniform background, the presence of an interval of a certain length with an unusually high density of points.  This is considered today a quintessential detection problem, with applications in the detection of disease clusters \cite{besag1991detection} and syndromic surveillance \cite{heffernan2004ssp}, among many others \cite{glaz2001scan,glaz2009scan,glaz2012scan, handbook}.

In practice, even in the simplest case where only a single anomalous interval may be present, the length of that interval is almost always unknown.  In that case, it is natural to consider intervals of various lengths, but standardize the counts, leading to 
\beq\label{scan}
\sup_{0 \le a \le 1} \sup_{h_- \le h \le h_+} \frac{\sqrt{n}(F_n(a+h) - F_n(a) - h)}{\sqrt{h (1-h)}}.
\eeq
This can be seen to approximate the likelihood ratio test \cite{kulldorff1997spatial}.  The parameters $h_-$ and $h_+$ limit the search to intervals that are neither too short and nor too large.  The main goal of this paper is to derive the asymptotic (as $n \to \infty$) distribution  of \eqref{scan} along with its studentized counterpart
\beq\label{studentizedscan}
\sup_{0 \le a \le 1} \sup_{h_- \le F_n(a+h) - F_n(a) \le h_+} \frac{\sqrt{n}(F_n(a+h) - F_n(a) - h)}{\sqrt{(F_n(a+h) - F_n(a) ) (1- F_n(a+h) + F_n(a) )}}.
\eeq

\begin{rem}
From the four theorems in Section \ref{sec:main}, one finds out that relatively small scale $h$ dominates in \eqref{scan} and \eqref{studentizedscan}. There are works that apply scale corrections to the scan \cite{dumbgen2001multiscale, sharpnack2016exact, konig2018multidimensional}. Under the scale corrections, one scale no longer dominates.
\end{rem}

\subsection{Related work: point processes}
In one of the most celebrated results in what is now the empirical process literature, \citet{kolmogorov1933sulla} derived the limiting distribution of $\sqrt{n} \, \sup_{0 \le a \le 1} (F_n(a) - a)$.  This is the Kolmogorov-Smirnov statistic, and it can be seen as scanning over intervals of the form $[0, a]$, $0 \le a \le 1$.  

For similar reasons that motivated the introduction of the normalized scan statistic \eqref{scan} as an improvement over the unnormalized one \eqref{scan0}, \citet{anderson1952asymptotic} introduced and studied normalized variants of the Kolmogorov-Smirnov statistic, some of them of the form $\sqrt{n} \sup_a (F_n(a) - a) \sqrt{\psi(a)}$, where $\psi$ is a given weight function.  The choice $\psi(a) = [a (1-a)]^{-1}$ is particularly compelling, leading to the statistic 
\beq\label{AD}
\sup_{0 \le a \le 1} \frac{\sqrt{n} (F_n(a) - a)}{\sqrt{a(1-a)}}.
\eeq
\citet{eicker1979asymptotic} and \citet{jaeschke1979asymptotic}  obtained the limiting distributions of this statistic, its variants of the form
\beq\label{vn}
V_n = \sup_{\epsilon_n \le a \le \delta_n}\frac{\sqrt{n} (F_n(a) - a)}{\sqrt{a(1 - a)}},
\eeq
and its Studentized counterpart
\beq\label{hvn}
\hat V_n = \sup_{\epsilon_n \le a \le \delta_n}\frac{\sqrt{n} (F_n(a) - a)}{\sqrt{F_n(a)(1 - F_n(a))}},
\eeq
for some given $0 \le \epsilon_n \le \delta_n \le 1$. 
We note that these statistics can be directly expressed in terms of the order statistics, $U_{(1)} \le \cdots \le U_{(n)}$, which when $\eps_n = 0$ and $\delta_n = 1$, is as follows
\beq\label{hc}
\max_{1 \le i \le n}\frac{i - n\ui}{\sqrt{n\ui(1 - \ui)}},
\eeq
and
\beq\label{hcstu}
\max_{1 \le i < n}\frac{i - n\ui}{\sqrt{i(1 - \frac{i}{n})}},
\eeq
respectively.

\citet{berk1979goodness} proposed to directly look at each order statistic individually, combining the resulting tests using Tippett's method, leading to 
\beq
\min_{1 \le i \le n} B(\ui; i, n - i + 1),
\eeq
with $B(\cdot; a, b)$ denoting the distribution function of the Beta$(a, b)$ distribution.  \citet{moscovich2016exact} and \citet{gontscharuk2017asymptotics} derived the asymptotic distribution of this statistic. Other goodness-of-fit tests include the reversed Berk-Jones statistic \cite{jager2004new} and Phi-divergence tests \cite{jager2007goodness}, etc.

We note that the two-sided version of the above-mentioned tests have been considered and studied.

\subsection{Related work: signals}
Closely related to the work above is the setting where, instead of observing a point cloud, one observes a signal.  The simplest situation is that of a one-dimensional signal defined on a regular lattice, that is, of the form $X_1, \dots, X_n$.  The null situation is when these are iid from some underlying distribution on the real line, for example, the standard normal distribution. By writing
\beq
R_n = \max_{1 \le i \le n - k} \{S_{i + k} - S_i\},~~~k = \lfloor c \log n\rfloor,~~ c > 0,
\eeq
\citet{erdos1970new} investigated the strong limit of $R_n/(\alpha k)$ when $X_1$ has a finite moment generating function around a neighborhood of zero. \citet{deheuvels1986exact} studied $\limsup$ and $\liminf$ of $(R_n - \alpha k)/\log k$. When the goal is to detect an interval where the observations are unusually large, and the length of the (discrete) interval is unknown, it becomes of interest to study the following scan statistic
\beq\label{maxincre}
Z_n = \max_{1 \le i < j \le n} \frac{S_j - S_i}{\sqrt{j - i}},
\eeq
where $S_k = \sum_{i = 1}^k X_i$.

The study of such statistics dates back to the work of \citet{darling1956limit}, who derived the limiting distribution of
\beq
\max_{1 \le j \le n} \frac{S_j}{\sqrt{j}},
\eeq
which can be seen as scanning intervals of the form $\{1, \dots, j\}$.

\citet{siegmund1995using} provided the limiting distribution of the statistic \eqref{maxincre} under the assumption that the $X_i$'s are iid normal.  This study was extended by \citet{mikosch2010limit} to the case where the underlying distribution is heavy-tailed, and by \citet{kabluchko2014limiting} when the underlying distribution has finite moment generating function in a neighborhood of the origin.
\citet{kabluchko2011extremes} generalized the result to the multivariate setting where the variables are indexed by a multi-dimensional lattice; see also \cite{sharpnack2016exact, konig2018multidimensional}.
\citet{proksch2018multiscale} studied more general scanning procedures motivated within the framework of inverse problems.

There is a parallel literature for continuous processes, where one observes instead $X_t, t \in [0,1]$ (in dimension 1).  See, for example, \citet{aldous2013probability, qualls1973asymptotic} and \citet{chan2006maxima}.

\subsection{Related work: Lipschitz-1/2 modulus of the uniform empirical process}
The results of \citet{mason1983strong} on the Lipschitz-1/2 modulus of the uniform empirical process, defined by
\beq\label{eq:osci}
\sup_{0 \le a \le 1 - h} \sup_{t \le h \le 1} \frac{\sqrt{n}|F_n(a+h) - F_n(a) - h|}{\sqrt{h}},
\eeq
are most closely related to the present results. They proved strong limit theorems for \eqref{eq:osci} with $t = t_n \to 0$ at various rates. We refer to \citet[Chapter 14.2]{shorack2009empirical} for a review.

\subsection{Content}

The rest of the paper is organized as follows. 
We state our main results in \secref{main}, where we provide asymptotic distributions of some scan statistics and their variants. The proofs are provided in \secref{proof}. 

\section{Main results} \label{sec:main}
Recall that $U_1,\ldots,U_n$ are iid from the uniform distribution on $[0, 1]$, and that $U_{(1)} \leq \cdots \leq U_{(n)}$ denote the order statistics. (Whenever needed, we write $U_{(0)} \equiv 0$ and $U_{(n + 1)} \equiv 1$.)  

\subsection{Studentized scan statistics}
We derive the asymptotics for \eqref{studentizedscan} before \eqref{scan} for convenience of the proof. As we did earlier, we may rewrite \eqref{studentizedscan} directly in terms of the order statistics, in the form of 
\beq\label{mnpluskl}
M_n^+(k, l)  = \max_{0 \le i < j \le n:\, k \le j - i < l} M_{i, j},
\eeq
where
\beq\label{normalizedorder}
M_{i, j}  = \frac{j - i - n(\uj - \ui)}{\sqrt{(j - i)(1 - \frac{j - i}{n})}}.
\eeq
We will be particularly interested in the following special case
\beq\label{stats:mnplus}
M_n^+ := M_n^+(1, n),
\eeq
which is the analog of \eqref{hcstu}.
Not surprisingly, the limiting distribution is an extreme value distribution, specifically, a Gumbel distribution. 
Indeed, we have the following. 

\begin{thm}\label{thm:mnplus}
For any $\tau \in \bbR$, 
\beq\label{result:mnplus}
\lim_{n \to \infty}\P \bigg\{ M_n^+ \le \sqrt{2 \log n} - \frac{3\log\log n}{2\sqrt{2 \log n}} + \frac{\tau}{\sqrt{2 \log n}} \bigg\}  = \exp\big(-c\, \exp(-\tau)\big),
\eeq
where $c = \tfrac{8}{9\sqrt{\pi}}$.
\end{thm}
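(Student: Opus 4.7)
The plan is to reduce $M_n^+$ to the scan statistic of a partial-sum random walk and then invoke Theorem~1.1 of \citet{kabluchko2014limiting}. Using the standard representation $U_{(i)} = S_i/S_{n+1}$ with $S_i = E_1 + \cdots + E_i$ and iid $\mathrm{Exp}(1)$ variables $E_k$, set $X_k = 1 - E_k$ and $\tilde S_k = X_1 + \cdots + X_k$. A direct algebraic manipulation gives
\beq
(j-i) - n\bigl(U_{(j)} - U_{(i)}\bigr) \;=\; \frac{n(\tilde S_j - \tilde S_i) - (j-i)\,\tilde S_{n+1}}{S_{n+1}}.
\eeq
Since $S_{n+1}/n = 1 + O_p(n^{-1/2})$ and $\tilde S_{n+1} = O_p(\sqrt{n})$, on the scale $j - i \ll n$ this reduces to
\beq
M_{i,j} \;=\; \frac{\tilde S_j - \tilde S_i}{\sqrt{j-i}} \;+\; R_{i,j},
\eeq
where $R_{i,j}$ is a remainder that I would control uniformly over the admissible range.

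The increments $X_k$ are mean zero, unit variance, and have moment generating function $\varphi(\lambda) = e^\lambda/(1+\lambda)$ analytic in a neighborhood of the origin, so Cram\'er's condition is met. Applying Theorem~1.1 of \citet{kabluchko2014limiting} then yields
\beq
\lim_{n \to \infty}\P\!\left[\max_{0 \le i < j \le n}\frac{\tilde S_j - \tilde S_i}{\sqrt{j-i}} \le \sqrt{2\log n} - \frac{3\log\log n}{2\sqrt{2\log n}} + \frac{\tau}{\sqrt{2\log n}}\right] = \exp\bigl(-c\,e^{-\tau}\bigr),
\eeq
with a location constant $c$ expressible through the cumulant generating function $K(\lambda) = \lambda - \log(1+\lambda)$. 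Evaluating the resulting saddle-point/Gaussian integral for this particular $K$ should produce $c = 8/(9\sqrt{\pi})$.

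What remains is to verify two things. First, $R_{i,j} = o_p(1/\sqrt{\log n})$ uniformly, so that the Gumbel limit transfers from the random-walk scan to $M_n^+$; this will follow from the bound $n/S_{n+1} - 1 = O_p(n^{-1/2})$ being multiplied by fluctuations of order $\sqrt{\log n}$ at the maximum, combined with a truncation to $j - i \le n^\alpha$ for some $\alpha \in (0,1)$. Second, the contribution of pairs with $j - i > n^\alpha$ must stay strictly below $\sqrt{2\log n}$ with probability tending to $1$; here Bernstein/Chernoff tail bounds for binomials enter, and the Poisson approximation for $\mathrm{Bin}(n,h)$ mentioned in the abstract, combined with a union bound over dyadic length scales, should close this estimate. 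The main obstacle is the uniform bookkeeping in the first step --- balancing the $n^{-1/2}$ factor against $\sqrt{\log n}$ fluctuations for every admissible length simultaneously, so as not to disturb the $1/\sqrt{2\log n}$ Gumbel scale --- and a secondary analytic task is the explicit closed-form evaluation of Kabluchko's constant for the distribution $1 - \mathrm{Exp}(1)$ to obtain exactly $8/(9\sqrt{\pi})$.
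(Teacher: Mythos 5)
Your high-level reduction is the same as the paper's: use the exponential-spacings representation $U_{(i)} = S_i/S_{n+1}$, replace $n/S_{n+1}$ by $1 + O_p(n^{-1/2})$, and invoke \citet{kabluchko2014limiting} (whose Theorems 1.1 and 1.2 give the Gumbel limit with the constant $8/(9\sqrt{\pi})$ for superlogarithmic increment distributions, no separate saddle-point evaluation required). The sketch of the remainder control for $j-i \le n^\alpha$ is also essentially sound.

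The gap is in your plan for $j - i > n^\alpha$. You propose Bernstein/Chernoff tail bounds, a dyadic union bound, and a ``Poisson approximation for $\mathrm{Bin}(n,h)$.'' None of these, as stated, close the argument, and the last one is a misreading of the abstract --- the Poisson approximation referenced there is the Arratia--Goldstein--Gordon dependency-graph Poisson limit theorem, used in the proof of Theorem~\ref{thm:tmnplus}, not a binomial--Poisson approximation here. More importantly, the large-$(j-i)$ range splits into two genuinely different subproblems, and neither is a naive tail-plus-union-bound argument. First, for $K_n \le j-i \le n - K_n$ with $K_n = n\log\log n / \log n$, the normalizer $w_{i,j}^n = \sqrt{\tfrac{j-i}{n}(1-\tfrac{j-i}{n})}$ stays bounded away from $0$ on the scale $\sqrt{\log\log n / \log n}$, and one must compare $u_n w_{i,j}^n$ (which is $\asymp \sqrt{\log\log n} \to \infty$) against the \emph{tight} quantity $\sqrt{n}\max_i|\bar U_{(i)}|$ via Kolmogorov's theorem --- this tightness argument is one of the paper's three named ingredients and is missing from your proposal. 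Second, for $j - i \ge n - K_n$, the factor $1 - (j-i)/n$ is tiny, so $M_{i,j}$ is not small for obvious concentration reasons; a direct union bound over the $\sim nK_n$ pairs would cost $\sqrt{2\log(nK_n)} \approx 2\sqrt{\log n}$, which is \emph{larger} than the target threshold $u_n \sim \sqrt{2\log n}$ and kills the argument. The paper instead flips indices ($j' = n - j$, $U'_{(j'+1)} = 1 - U_{(j)}$), decouples the two tails, and reduces to the weighted Kolmogorov--Smirnov law of \citet{eicker1979asymptotic}, which gives a bound of order $\sqrt{2\log\log n} = o(\sqrt{\log n})$. That index-flipping reduction is the missing idea you would need to supply.
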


Similarly, define the opposite one-sided statistics
\beq
M_n^-(k, l)  = -\min_{0 \le i < j \le n:\, k \le j - i \le l} M_{i, j},
\eeq
and
\beq\label{stats:mnminus}
M_n^- := M_n^-(1, n).
\eeq
Finally, define the two-sided statistics
\beq
M_n(k, l) = \max\{M_n^+(k, l), M_n^-(k, l)\} = \max_{0 \le i < j \le n:\, k \le j - i < l} |M_{i, j}|,
\eeq
and 
\beq\label{stats:mn}
M_n := M_n(1,n) = \max\{M_n^+, M_n^-\}. 
\eeq

For these statistics too, the limiting distribution is a Gumbel distribution, but what is surprising here is that these statistics do not behave the same way as $M_n^+$.  In particular, $M_n^- = (1 + o_P(1)) \log n$, and therefore dominates $M_n^+$ in the large-sample limit, implying that $M_n = M_n^-$ with probability tending to~1.
Indeed, we have the following.

\begin{thm} \label{thm:mnminus}
For any $\tau \in \bbR$, 
\beq \label{result:mnminus}
\lim_{n \to \infty}\P\big\{ M_n^- \le \log n + \tau \big\} =  \exp(-\exp(1 - \tau)).
\eeq
Moreover, 
\beq \label{result:mn}
\lim_{n \to \infty} \P\big\{ M_n = M_n^- \big\} = 1.
\eeq
\end{thm}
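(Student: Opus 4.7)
The plan is to reduce the two-dimensional scan defining $M_n^-$ to the classical problem of the maximum uniform spacing, by first showing that the extremes of $-M_{i,j}$ are achieved (with high probability) on nearest-neighbour pairs with $j - i = 1$, and then to obtain \eqref{result:mn} by comparing scales against Theorem~\ref{thm:mnplus}.

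First I would show that for each $k \ge 2$ the $k$-spacing contribution $\max_i(-M_{i,\,i+k})$ is only of order $(\log n)/\sqrt{k}$, hence strictly dominated by the $k = 1$ contribution of order $\log n$. Writing
\[
 -M_{i,\,i+k} \;=\; \frac{n(U_{(i+k)} - U_{(i)}) - k}{\sqrt{k(1 - k/n)}}
\]
and using that $n(U_{(i+k)} - U_{(i)})$ is approximately $\Gamma(k,1)$-distributed with right tail $t^{k-1}e^{-t}/(k-1)!$, a Poisson heuristic gives $\max_i n(U_{(i+k)} - U_{(i)}) = \log n + (k-1)\log\log n + O_P(1)$, so $\max_i (-M_{i,\,i+k}) \asymp (\log n)/\sqrt{k}$. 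The most delicate step of the proof, in my view, is upgrading this to a bound that holds uniformly over all $2 \le k < n$; I would achieve it by combining Chernoff bounds on $\Gamma(k,1)$ with a union bound over $k$, or alternatively by invoking results on the Lipschitz-$1/2$ modulus of the uniform empirical process such as those of \citet{mason1983strong}.

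Having reduced to nearest-neighbour spacings $D_i = U_{(i)} - U_{(i-1)}$, I would write
\[
  M_n^- \;=\; \max_{1 \le i \le n}\frac{n D_i - 1}{\sqrt{1 - 1/n}} \;+\; o_P(1)
\]
and invoke the classical Gumbel limit for the maximum uniform spacing, namely $n \max_i D_i - \log n \xrightarrow{d} G$ with $G$ standard Gumbel. This in turn follows by Poisson approximation from the marginal tail $\P(D_i > x) = (1 - x)^n$ together with the exchangeability of the spacings. Combining with $\sqrt{1 - 1/n} = 1 + o(1)$ and carefully tracking the additive constants yields \eqref{result:mnminus}.

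Finally, \eqref{result:mn} follows from an elementary scale comparison. Theorem~\ref{thm:mnplus} gives $M_n^+ = \sqrt{2\log n}\,(1 + o_P(1))$, while \eqref{result:mnminus} gives $M_n^- = \log n\,(1 + o_P(1))$. Since $\log n/\sqrt{2\log n} \to \infty$, it follows that $\P(M_n^- > M_n^+) \to 1$, whence $M_n = \max\{M_n^+, M_n^-\} = M_n^-$ with probability tending to~1.
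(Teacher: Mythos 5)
Your route is genuinely different and should work with care. The paper rewrites the order statistics as exponential partial sums normalized by their total, conditions on $\Omega_n$ (the total is within $(\log\log n)\sqrt{n}$ of $n+1$), then invokes Theorem~1.7 of \citet{kabluchko2014limiting} --- which is precisely the $k=1$ dominance you are after, since that theorem says the one-sided scan $Z_n^-$ has the same limit as $\max_i(-X_i)$ --- and disposes of the mid-range of $j-i$ via Kolmogorov's theorem and the far end by reflection. You instead stay with the spacings and prove the $k\ge 2$ dominance directly from $\Gamma(k,1)$ tails, which avoids the Kabluchko--Wang machinery at the cost of doing the uniform-in-$k$ control yourself. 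Two caveats you leave implicit: you still need a conditioning event like $\Omega_n$ before $n(U_{(i+k)}-U_{(i)})\approx\Gamma(k,1)$ is a usable surrogate, since the exact spacings are constrained to sum to $1$; and that approximation fails as $k\to n$, where the far end is better treated via the complementary interval, $n(1-(U_{(j)}-U_{(i)}))\approx\Gamma(n+1-k,1)$, a \emph{small}-value event rather than a large one. That said, the Chernoff-plus-union plan is indeed fine for $2\le k\le n/2$: already for $k=2$ the per-pair tail at level $(1-\epsilon)\log n$ is $n^{-\sqrt{2}(1-\epsilon)+o(1)}$, which beats the $n$ union-bound factor once $\epsilon<1-1/\sqrt{2}$, and it only improves for larger $k$. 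Finally, if you carry the additive constant in $nD_i-1$ to the end you obtain $\lim_n \P(M_n^- \le \log n+\tau) = \exp(-e^{-1-\tau})$; the same $e^{-1}$ drops out of the paper's own reduction to $\max_i(-X_i)$, whose right tail is $e^{-t-1}$, so the constant $\exp(-\exp(1-\tau))$ in \eqref{result:mnminus} appears to carry a sign slip. Your scale comparison for \eqref{result:mn} is the paper's own argument.
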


\subsection{Standardized scan statistics}
We also examine the large-sample behavior of standardized scan statistics \eqref{scan}. Following the same way as rewriting \eqref{studentizedscan} before. Define
\beq
\tilde M_n^+(k, l) := \max_{0 \le i < j \le n: k \le j - i \le l}\tilde M_{i, j},
\eeq
where
\beq
\tilde M_{i, j} := \frac{j - i - n(\uj - \ui)}{\sqrt{n(\uj - \ui)(1 - \uj + \ui)}}.
\eeq
Note that 
\beq\label{stats:tmnplus}
\tilde M_n^+ := \tilde M_n^+(1, n),
\eeq
is the analog of \eqref{hc}.

The behavior of $\tilde M_n^+$ turns out to be very different from that of its studentized analog $M_n^+$.  However, we recover a similar behavior if we appropriately bound the length of the scanning interval from below.

\begin{thm}\label{thm:tmnplus}
For any $\tau > 0$,
\beq\label{eq:tmnplusall}
\lim_{n \to \infty}\P \bigg\{ \tilde M_n^+ \le \sqrt{\frac{n}{\tau}} \bigg\} = \exp(-\tau).
\eeq
Moreover, for any $A > 0$, defining $k_n = \lceil A(\log n)^3 \rceil$,
\beq\label{eq:tmnpluscubelocal}
\lim_{n \to \infty}\P\bigg\{  \tilde M_n^+(k_n, n) \le \sqrt{2 \log n} - \frac{3\log\log n}{2\sqrt{2 \log n}} + \frac{\tau}{\sqrt{2 \log n}} \bigg\}  = \exp(- c_A\, \exp(-\tau)) ,
\eeq
where $c_A = \int_A^\infty \Lambda_1(a)da$ with $\Lambda_1(a) = \frac{1}{2\sqrt{\pi}a^2} \exp\big(\frac{\sqrt{2}}{3\sqrt{a}}\big)$.
\end{thm}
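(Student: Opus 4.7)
The plan is to handle the two assertions separately, since they are driven by very different scale ranges.

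\textbf{Part \eqref{eq:tmnplusall}.} The statistic is dominated by the smallest scale, $j-i=1$, where $\tilde M_{i,i+1}=(1-nD_i)/\sqrt{nD_i(1-D_i)}$ with $D_i=U_{(i+1)}-U_{(i)}$; for small $nD_i$ this is $\sim 1/\sqrt{nD_i}$, and is of order $\sqrt n$ when a spacing is of order $1/n^2$. I would first show that contributions from $j-i\ge 2$ are negligible: if $\tilde M_{i,i+k}>\sqrt{n/\tau}$ with $k\ge 2$, then $U_{(i+k)}-U_{(i)} = O(k^2\tau/n^2)$, an event of probability $O((k\tau/n)^k/k!)$ per $i$ by the Gamma density of $k$ consecutive uniform spacings; a union bound over $i$ and summation over $k\ge 2$ (using Stirling) yields $o(1)$. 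It then suffices to handle $\{\max_i \tilde M_{i,i+1}\le \sqrt{n/\tau}\}$, which to leading order is $\{\min_i D_i\ge \tau/n^2\}$. The classical joint density of uniform spacings gives $\P\{\min_i D_i\ge s\} = (1-(n+1)s)_+^n$, and at $s=\tau/n^2$ this converges to $e^{-\tau}$.

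\textbf{Part \eqref{eq:tmnpluscubelocal}.} With $j-i\ge k_n=\lceil A(\log n)^3\rceil\to\infty$, I would follow the three-step strategy analogous to the proof of Theorem~\ref{thm:mnplus}: a refined large-deviation tail estimate, a \citet{kabluchko2014limiting}-style per-scale rate, and a Chen--Stein Poisson approximation integrated over scales. The new ingredient relative to Theorem~\ref{thm:mnplus} is a cubic correction to the Kullback--Leibler rate that reflects the fact that $\tilde M_{i,j}$ normalizes by the empirical rather than expected variance.

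\emph{Step (a).} Writing $\P\{\tilde M_{0,k}>u\} = \P\{B(n,h^*)\ge k\}$ with $h^*$ determined by $k-nh^*=u\sqrt{nh^*(1-h^*)}$, Bahadur--Rao applied with the KL rate $I(p,h)=p\log(p/h)+(1-p)\log((1-p)/(1-h))$, Taylor-expanded in $p$ about $p=h$ to cubic order (for small $h$), yields
\[
nI(p,h^*) = \tfrac{1}{2} u^2 - \tfrac{u^3}{6\sqrt{np}} + o(1),
\]
whence at $u=u_n(\tau)$ and $k=np=a(\log n)^3$,
\[
\P\{\tilde M_{0,k}>u_n(\tau)\} \sim \frac{\exp(\sqrt2/(3\sqrt a))\,\log n}{2\sqrt\pi\,n}\,e^{-\tau}.
\]

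\emph{Step (b).} A \citet{kabluchko2014limiting}-type analysis of the maximum at fixed scale $k$ of the tent-covariance process $i\mapsto\tilde M_{i,i+k}$, after accounting for the effective spatial count, the Gaussian Mills ratio, and the Jacobian $dk=(\log n)^3\,da$, produces a per-scale exceedance intensity $\Lambda_1(a)\,e^{-\tau}\,da$ in the rescaled variable $a=k/(\log n)^3$. Integrating over $a\in[A,\infty)$ (integrability at $\infty$ follows from $\Lambda_1(a)\sim 1/(2\sqrt\pi\,a^2)$) gives total rate $c_A e^{-\tau}$, and a Chen--Stein approximation over the joint $(i,k)$-grid, in which $k_n\to\infty$ keeps the local dependence tame, yields the Gumbel limit $\exp(-c_A e^{-\tau})$.

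\textbf{Main obstacle.} The delicate point is Step (a): extracting the scale-dependent tilt $\exp(\sqrt2/(3\sqrt a))$ requires expanding the KL rate beyond the Gaussian quadratic to the cubic term, uniformly in $k\in[k_n,n-1]$. The Chen--Stein step is standard in principle but needs careful bookkeeping of dependence both within scales (overlapping intervals) and across scales ($k\ne k'$).
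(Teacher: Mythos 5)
\textbf{Part \eqref{eq:tmnplusall}.} Your strategy (contributions come from $j-i=1$; show $j-i\ge 2$ negligible) is the same as the paper's, and the evaluation of $\P(\min_i D_i\ge t)$ is essentially right up to a harmless bookkeeping discrepancy ($n$ vs.\ $n+1$ spacings: the statistic involves only the $n$ gaps $U_{(i+1)}-U_{(i)}$, $0\le i\le n-1$, giving $(1-nt)^n$, not $(1-(n+1)t)^n$; both yield $e^{-\tau}$). However, your union bound for $j-i\ge 2$ has a genuine gap. First, a computational slip: the threshold you correctly identify is $U_{(i+k)}-U_{(i)}\le Ck^2\tau/n^2$, so the Beta-CDF bound gives per-$i$ probability $O\big((k^2\tau/n)^k/k!\big)$, not $O\big((k\tau/n)^k/k!\big)$; the $k^k$ you dropped is decisive. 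More importantly, even the corrected bound is vacuous once $k^2\tau/n$ exceeds $1$: the estimate $\P(\mathrm{Beta}(k,n-k+1)<s)\le\binom{n}{k}s^k$ only controls a small-$s$ regime, and for $k\gtrsim\sqrt{n}$ the relevant threshold is not small (it saturates around $\tau/(\tau+1)$ when $k$ is close to $n$). Summing your bound over all $2\le k<n$ diverges for fixed $\tau$. The paper avoids this by splitting $[2,n)$ into three ranges: a direct large-deviation union bound for $j-i\le(\log n)^2$; a comparison $\tilde M_n^+\asymp_P M_n^+$ (via Theorems~\ref{thm:mnplus} and \ref{thm:mnminus}) on $[(\log n)^2,\,n-(\log n)^2]$, which reduces the problem to $M_n^+\asymp_P\sqrt{\log n}=o(\sqrt n)$; and an index flip plus Eicker's LIL-type result for $j-i>n-(\log n)^2$. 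You need some version of the last two devices---your single union bound cannot reach the large-$k$ regime.

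\textbf{Part \eqref{eq:tmnpluscubelocal}.} Your three-step outline (refined tilted tail $\to$ per-scale rate $\to$ Chen--Stein over scales) is exactly the paper's roadmap, and you correctly extract the key scale-dependent factor $\exp\!\big(\tfrac{\sqrt2}{3\sqrt a}\big)$ from the cubic term of the rate function at $k=a(\log n)^3$; the sign is right (the Studentized and standardized statistics pick opposite-signed cubic corrections). Two caveats. Your Step~(a) uses the Binomial/KL framing, which is valid for the marginal tail (since $U_{(j)}-U_{(i)}\overset{d}{=}U_{(j-i)}$), but for the joint structure in Steps~(b)--(c) one needs a representation that makes the maximum tractable; the paper converts to partial sums of centered exponentials via $U_{(j)}=\sum_{i\le j}Y_i/\sum_{i\le n+1}Y_i$ and a CLT-controlled event $\Omega_n$, and then applies Lemma~\ref{lem:weakcontroltzijplus} (the $\tilde Z^+$ analogue of Kabluchko--Wang's local-tail bound) and Lemma~\ref{lem:interchange} to justify passing local convergence through the integral over scales. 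Also, you do not address the truncation $\tilde M_n^+(A_2(\log n)^3,n)$; the paper needs a separate argument (dyadic blocking plus Lemma~\ref{lem:weakcontroltzijplus} for $[A_2(\log n)^3,(\log n)^4]$, comparison to $M_n^+$ for the middle range, and an index-flip estimate for the top range) to show this vanishes as $A_2\to\infty$. These are real technical steps, not just ``bookkeeping.''
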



\begin{rem}
Here we choose $k_n \propto (\log n)^3$ because we want to examine the behavior of $\tilde M^+(K, L)$, compared to its counterpart $M^+(K, L)$ at the most contributed part, which is reflected in the proof of Theorem \ref{thm:mnplus}. For readers who are curious about other choices of $k_n$, we note that $\tilde M_{i, j}$ behaves like subgaussian, or named as ``sublogarithmic'' in \cite{kabluchko2014limiting}. Roughly speaking, $\tilde M_n^+(k_n, n)$ will likely to take its maximum around the indices $i$, $j$ with small length, that is, when $j - i$ is close to $k_n$.
\end{rem}

Define the standardized analog of \eqref{stats:mnminus}
\beq
\tilde M_n^-(k, l) = -\min_{0 \le i < j \le n:\, k \le j - i \le l}\tilde M_{i, j},
\eeq
with
\beq\label{stats:tmnminus}
\tilde M_n^- := \tilde M_n^-(1, n),
\eeq
as well as the analog of \eqref{stats:mn}
\beq
\tilde M_n(k, l) = \max\{\tilde M_n^+(k, l), \tilde M_n^-(k, l)\},
\eeq
with
\beq\label{stats:tmn}
\tilde M_n := \tilde M_n(1, n) = \max\{\tilde M_n^+, \tilde M_n^-\}.
\eeq

\begin{thm}\label{thm:tmnminus}
We have
\beq
\lim_{n \to \infty} \P\big( \tilde M_n = \tilde M_n^+ \big) = 1.
\eeq
Thus for any $\tau > 0$, 
\beq\label{eq:tmnsall}
\lim_{n \to \infty}\P \bigg( \tilde M_n \le \sqrt{\frac{n}{\tau}} \bigg) = \exp(-\tau).
\eeq

\end{thm}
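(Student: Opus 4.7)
The plan is to establish that $\tilde M_n^-$ is of strictly smaller order than $\tilde M_n^+$, so that the two-sided maximum is realized on the positive side with probability tending to one. Given this, the limit \eqref{eq:tmnsall} follows immediately from \eqref{eq:tmnplusall}.

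\textbf{Step 1: lower bound on $\tilde M_n^+$.} From \eqref{eq:tmnplusall} together with the monotonicity of $\tau \mapsto \P(\tilde M_n^+ \le \sqrt{n/\tau})$ and the fact that $e^{-\tau} \to 0$ as $\tau \to \infty$ (applied with $\tau_n = n/(A^2 \log n) \to \infty$), for every fixed $A>0$,
\beq
\P\!\left(\tilde M_n^+ \le A\sqrt{\log n}\right) \to 0.
\eeq

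\textbf{Step 2: upper bound on $\tilde M_n^-$.} The key technical step is to show that, for $A$ sufficiently large,
\beq
\P\!\left(\tilde M_n^- > A\sqrt{\log n}\right) \to 0.
\eeq
Fix $(i,j)$ with $k := j-i$ and set $p = U_{(j)}-U_{(i)} \sim \mathrm{Beta}(k, n+1-k)$. A quadratic calculation shows that $\{-\tilde M_{i,j} > s\}$ coincides with $\{p > p_s\}$, where $p_s \in [k/n, 1]$ is the unique solution of $(np_s - k)^2 = s^2 n p_s(1-p_s)$. Beta-Binomial duality followed by Chernoff's inequality then yields
\beq
\P(-\tilde M_{i,j} > s) = \P(\mathrm{Bin}(n,p_s) \le k-1) \le \exp\!\left(-\frac{(np_s-k)^2}{2np_s}\right) = \exp\!\left(-\frac{s^2(1-p_s)}{2}\right).
\eeq
In the bulk regime $p_s \le 1/2$ this gives the sub-Gaussian bound $\P(-\tilde M_{i,j} > s) \le \exp(-s^2/4)$, so a union bound over the $O(n^2)$ such pairs with $s = A\sqrt{\log n}$ and $A^2 > 8$ produces a vanishing contribution. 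For $k$ close to $n$ (equivalently $p_s$ close to $1$) the Gaussian bound degenerates, but there are only $O(R^2)$ pairs $(i,j)$ with $n-k \le R$, and a direct argument using the near-zero Gamma behavior of $n(1-p)$---which is approximately $\mathrm{Gamma}(n+1-k)$-distributed---shows that the contribution from this boundary regime is also $o(1)$.

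\textbf{Step 3: conclusion.} Combining Steps 1 and 2,
\beq
\P(\tilde M_n \ne \tilde M_n^+) \le \P(\tilde M_n^+ \le A\sqrt{\log n}) + \P(\tilde M_n^- > A\sqrt{\log n}) \to 0,
\eeq
so $\tilde M_n = \tilde M_n^+$ with probability tending to one, and \eqref{eq:tmnsall} follows from \eqref{eq:tmnplusall}.

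The main obstacle is the boundary regime in Step 2. When $k$ is close to $n$, the Chernoff exponent $s^2(1-p_s)/2$ shrinks because $p_s \to 1$, so a naive sub-Gaussian union bound cannot close the argument. One must exploit the polynomial $\mathrm{Gamma}$ tail of $n(1-p)$ near $0$ to get a quantitatively sharper per-pair bound of the form $O(s^{-2(n-k+1)})$, and verify that summing these bounds against the pair count $n+1-k$ across the boundary stratum $n-k \le \sqrt{n}$ still produces a vanishing total.
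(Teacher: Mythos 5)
Your plan is correct in outline, and Step 1 is fine, but Step 2 has a real gap in the quantification of where your Chernoff bound fails. The bound $\P(-\tilde M_{i,j}>s)\le\exp\bigl(-s^2(1-p_s)/2\bigr)$ drops below $n^{-2}$ (which is what the union bound over $O(n^2)$ pairs requires) only when $1-p_s\gtrsim 4\log n/s^2 = 4/A^2$. Writing $k=j-i$, $m=n-k$, $g=n(1-p_s)$, the defining relation $(np_s-k)^2=s^2np_s(1-p_s)$ gives $g\approx m-s\sqrt m$ for $m\gg s^2$, so $1-p_s=g/n\approx m/n$, and the Chernoff exponent is $\approx s^2m/(2n)$. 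With $s=A\sqrt{\log n}$ this is $O(1)$ (not growing) as soon as $m\lesssim n/\log n$, and more importantly it is below $2\log n$ for all $m\lesssim 4n/A^2$ — a \emph{constant fraction} of $n$. So the regime where your per-pair bound is insufficient is not a thin boundary stratum $\{n-k\le R\}$ of $O(R^2)$ pairs; it comprises $\Theta(n^2)$ pairs, and the proposed Gamma-tail fallback does not cover it. The underlying cause is that $\exp\bigl(-(np_s-k)^2/(2np_s)\bigr)$ is a weak form of Chernoff that ignores the $(1-p_s)$ in the variance; the sharp KL-based bound, or a Poisson approximation for $\mathrm{Bin}(n,1-p_s)$ at threshold $m+1$, does give $\approx e^{-s^2/2}$ uniformly across the offending $m$-range, so the estimate you are after \emph{is} true, but your written argument does not establish it.

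Beyond the gap, your route is also more ambitious than necessary. The paper only shows $\tilde M_n^- = o_P(\sqrt n)$: since $\tilde M_n^+/\sqrt n$ converges in distribution to a strictly positive limit by \eqref{eq:tmnplusall}, $\P(\tilde M_n^- \ge \tilde M_n^+) \le \P(\tilde M_n^+ \le \delta\sqrt n) + \P(\tilde M_n^- > \delta\sqrt n) \to e^{-1/\delta^2}$ for each fixed $\delta>0$, and letting $\delta\to0$ finishes the argument. This weaker target is what makes the paper's sketch — transforming $\tilde M^-_{i,j}$ to $\tilde Z^-_{i,j}$, using the lower bound $I^-(g^-(s))\ge s^2/2$, and splitting scales as in the proof of Theorem~\ref{thm:tmnplus} — tractable without any fine control on the $\log n$ scale. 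By contrast you are effectively proving $\tilde M_n^-=O_P(\sqrt{\log n})$, which is a sharper statement and forces careful per-pair large-deviation estimates across all $k$. Your Beta-Binomial duality is a clean and self-contained idea, and with the sharper Chernoff/Poisson bound it would close; but as written Step 2 does not.
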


\begin{rem}
While the behavior of the Studentized statistic $M_n^+$ is driven by the smallest intervals, this is not as much the case for the standardized statistic $\tilde M_n^+$.  Indeed, a large value of $M_n^+$ comes from some $n(\uj - \ui)$ being large compared to $j - i$, however, $n(\uj - \ui)$ being in the denominator defining $\tilde M_n^+$, its impact is lessened.
\end{rem}


\section{Proofs of Main Results}\label{sec:proof}

Our proof arguments are based on standard moderate and large deviation results, Kolmogorov's theorem, a Poisson approximation \cite{arratia1989two}, as well as some technical results developed by  \citet{kabluchko2014limiting} in their study of the limiting distribution of the scan statistic in the form of \eqref{maxincre}.

\subsection{Preliminaries}
Throughout the paper, we assume that $\{X_k, k \in \bbZ\}$ are iid distributed with the density, 
\beq
f(x) = \mathbbm{1}(x \leq 1)\exp(x-1),
\eeq
noting that $-X_1 + 1$ follows standard exponential distribution.  This distribution has zero mean and unit variance. 
Define the two-sided partial sums, 
\beq
S_k^+ = \sum_{i=1}^k X_i, ~~~~ S_0^+= 0, ~~~~S_{-k}^+ = -\sum_{i=1}^k X_{-i}, ~~k \in \bbN
\eeq
and
\beq
S_k^- := - S_k^+. 
\eeq
They will play a central role in what follows. Define the normalized increments
\beq
Z_{i, j}^\pm = \frac{S_j^\pm - S_i^\pm}{\sqrt{j - i}},
\eeq
\beq
Z_n^\pm(k, l) := \max_{1 \le i < j \le n: k \le j - i \le l} Z^\pm_{i, j}, \qquad
Z_n^\pm := Z_n^\pm(1, n).
\eeq
Let $\varphi^\pm(t)$ be the cumulant generating functions of $\pm X_1$ respectively.  We have
\beq
\varphi^+(t) = t - \log(1 + t), \quad \text{if } t \ge 0.
\eeq
\beq
\varphi^-(t) = \begin{cases}
-t - \log(1 - t), \quad &\text{if }0 \le t \le 1,\\
\infty, &\text{if }t \ge 1,
\end{cases}
\eeq
Also, define $I^+(s)$ and $I^-(s)$ as the respective Legendre-Fenchel transforms (a.k.a., rate functions).  We have
\beq
I^+(s) = \begin{cases}
-s - \log(1 - s), &\text{if } 0 \le s \le 1,\\
\infty, &\text{if } s \ge 1,
\end{cases}
\eeq
and
\beq
I^-(s) = s - \log(1 + s),
\eeq
with respective Taylor expansions at $0$ (as $s \to 0$)
\begin{align*}
I^+(s) &= s^2/2 + s^3/3+ o(s^3),\\
I^-(s) &= s^2/2 - s^3/3+ o(s^3).
\end{align*}

We also prepare several usefull lemmas. The first two lemmas are well-known moderate and large deviations results \cite{cramer1938sommes, bahadur1960deviations}.  

\begin{lem}\label{lem:moderatedev}
Let $(x_k)$ be a sequence satisfying $x_k \to \infty$ and $x_k = o(\sqrt{k})$ as $k \to \infty$.  Then, as $k \to \infty$,
\beq
\P\bigg(  \frac{S_k^\pm}{\sqrt{k}}  \ge x_k \bigg)  \sim \frac{1}{\sqrt{2\pi}x_k} \exp\bigg\{-k I^\pm \bigg(\frac{x_k}{\sqrt{k}}  \bigg) \bigg\}.
\eeq
\end{lem}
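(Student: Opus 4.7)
This is a Cram\'er-type moderate deviation result, so the natural strategy is the exponential tilting (Esscher transform) method. I focus on $S_k^+$; the $S_k^-$ case is entirely analogous with $\varphi^-$ and $I^-$ replacing $\varphi^+$ and $I^+$. For each $k$ large enough, choose the tilt parameter $\theta_k > 0$ to be the unique solution of $(\varphi^+)'(\theta_k) = x_k/\sqrt{k}$. Since $(\varphi^+)'(0) = 0$, $(\varphi^+)''(0) = 1$, and $x_k/\sqrt{k} \to 0$ under the hypothesis $x_k = o(\sqrt{k})$, the tilt is well-defined for all large $k$ and satisfies $\theta_k \sim x_k/\sqrt{k} \to 0$.

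Under the tilted law $\P_{\theta_k}$ defined by $d\P_{\theta_k}/d\P = \exp(\theta_k S_k^+ - k\varphi^+(\theta_k))$, the increments $X_1,\ldots,X_k$ remain i.i.d.\ with density proportional to $e^{\theta_k x} f(x)$, and one has $\mathbb{E}_{\theta_k}[S_k^+] = \sqrt{k}\, x_k$ and $\mathrm{Var}_{\theta_k}(S_k^+) = k\,(\varphi^+)''(\theta_k)$. A direct change-of-measure yields
\[
\P\big(S_k^+ \ge \sqrt{k}\, x_k\big) = e^{k\varphi^+(\theta_k) - \theta_k \sqrt{k}\, x_k}\, \mathbb{E}_{\theta_k}\Big[\mathbbm{1}\{S_k^+ \ge \sqrt{k}\, x_k\}\, e^{-\theta_k (S_k^+ - \sqrt{k}\, x_k)}\Big].
\]
By Legendre duality, the explicit prefactor equals $\exp(-k I^+(x_k/\sqrt{k}))$, which produces the large-deviation weight. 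It then remains to show that the residual expectation is asymptotic to $1/(\sqrt{2\pi}\, x_k)$.

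To that end, let $W_k := (S_k^+ - \sqrt{k}\, x_k)/\sqrt{k\,(\varphi^+)''(\theta_k)}$, the standardized centered sum under $\P_{\theta_k}$, and set $a_k := \theta_k \sqrt{k\,(\varphi^+)''(\theta_k)}$. Because $\theta_k \sim x_k/\sqrt{k}$ and $(\varphi^+)''(\theta_k) \to 1$, one has $a_k \sim x_k \to \infty$. The residual expectation equals $\mathbb{E}_{\theta_k}[\mathbbm{1}\{W_k \ge 0\}\, e^{-a_k W_k}]$, and replacing the law of $W_k$ by a standard normal gives
\[
\int_0^\infty e^{-a_k w}\, \frac{1}{\sqrt{2\pi}}\, e^{-w^2/2}\, dw \sim \frac{1}{\sqrt{2\pi}\, a_k} \sim \frac{1}{\sqrt{2\pi}\, x_k},
\]
as $a_k \to \infty$. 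Combining with the Legendre factor proves the stated asymptotic.

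The main obstacle is justifying the normal approximation \emph{inside} the exponentially weighted integral, uniformly as $\theta_k$ varies with $k$. A plain Berry-Esseen bound on the distribution function of $W_k$ is insufficient, because the weight $e^{-a_k w}$ concentrates on a scale $1/a_k \to 0$ where only density-level (local CLT) information delivers a $1+o(1)$ multiplicative factor. The standard remedy is an Edgeworth expansion for the tilted density, valid under Cram\'er's smoothness condition (met here since $f$ is absolutely continuous), combined with integration by parts to convert the distribution-function estimate into a local one. The hypothesis $x_k = o(\sqrt{k})$ is precisely what keeps $\theta_k$ in a fixed small neighborhood of $0$, where the tilted laws share uniform moment bounds and characteristic function estimates, making the Edgeworth remainder terms uniformly negligible.
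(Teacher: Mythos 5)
The paper does not prove this lemma; it states it as a well-known result and cites \citet{cramer1938sommes} and \citet{bahadur1960deviations}, so there is no paper-internal argument to compare against. Your exponential-tilting (Esscher transform) argument is the standard, essentially original, route to Cram\'er's moderate deviation theorem, and your computations check out: with $\varphi^+(t) = t - \log(1+t)$ the tilt $\theta_k$ solving $\theta_k/(1+\theta_k) = x_k/\sqrt{k}$ satisfies $\theta_k \sim x_k/\sqrt{k} \to 0$, the Legendre identity $k\varphi^+(\theta_k) - \theta_k\sqrt{k}\,x_k = -kI^+(x_k/\sqrt{k})$ yields the exponential prefactor, and the residual reduces to $\mathbb{E}_{\theta_k}[\mathbbm{1}\{W_k \ge 0\}e^{-a_kW_k}]$ with $a_k \sim x_k$, whose Gaussian value is $e^{a_k^2/2}(1-\Phi(a_k)) \sim 1/(\sqrt{2\pi}\,a_k)$. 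One overcaution in your last paragraph is worth flagging: a plain Berry--Esseen bound on the tilted distribution function actually \emph{does} suffice here. Writing the residual as $a_k\int_0^\infty e^{-a_kw}\{G_k(w)-G_k(0)\}\,dw$ via integration by parts, a uniform $O(1/\sqrt{k})$ error on $G_k - \Phi$ contributes $O(1/\sqrt{k})$ to the integral, which is $o(1/a_k)$ precisely under the hypothesis $x_k = o(\sqrt{k})$; the Edgeworth/local-CLT machinery is not needed, although it of course also works. The genuinely essential point you correctly gesture at is uniformity of the Berry--Esseen constant across the tilted laws, which follows because the third absolute moment of the $\theta$-tilted density stays bounded for $\theta$ in a fixed neighborhood of zero (both $\varphi^\pm$ are smooth there with $(\varphi^\pm)''(0)=1$).
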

\begin{lem}
For every $k \in \bbN$ and $x > 0$, we have
\beq\label{eq:rescumulantineq}
 \P\bigg(  \frac{S_k^\pm}{\sqrt{k}} \ge x  \bigg)  \le \exp\bigg\{-k I^\pm \bigg( \frac{x}{\sqrt{k}}  \bigg) \bigg\} .
\eeq
Moreover, for every $A \le s_\infty$, where $s_\infty = \sup\{s \in \bbR: \P(X_1 \le s) \le 1\}$, there is $C_A > 0$ such that, for all $k \in \bbN$ and $x \in (0, A\sqrt{k})$,
\beq\label{eq:cumulantineq}
 \P\bigg(  \frac{S_k^\pm}{\sqrt{k}} \ge x  \bigg)  \le \frac{C_A}{x}  \exp\bigg\{-k I^\pm \bigg( \frac{x}{\sqrt{k}}  \bigg) \bigg\},
\eeq
\end{lem}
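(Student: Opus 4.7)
The plan is to derive both bounds via Cramér's exponential change of measure, with \eqref{eq:rescumulantineq} coming directly from Markov's inequality and \eqref{eq:cumulantineq} requiring an Esscher tilt followed by careful control of the tilted expectation. For \eqref{eq:rescumulantineq}, I apply Markov's inequality to $e^{\lambda S_k^\pm}$ for any $\lambda \ge 0$ in the domain of $\varphi^\pm$ to get
\[
\P\bigl(S_k^\pm \ge \sqrt{k}\, x\bigr) \le e^{-\lambda \sqrt{k}\, x + k \varphi^\pm(\lambda)} = \exp\bigl\{-k\bigl[\lambda(x/\sqrt{k}) - \varphi^\pm(\lambda)\bigr]\bigr\},
\]
then optimize over $\lambda \ge 0$. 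The supremum equals $I^\pm(x/\sqrt{k})$ by definition of the Legendre--Fenchel transform, giving the required bound; the optimizer lies in $\lambda \ge 0$ because $(\varphi^\pm)'(0) = 0$ and $x > 0$.

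For \eqref{eq:cumulantineq}, I would introduce the Esscher tilt. Let $t^\ast \ge 0$ solve $(\varphi^\pm)'(t^\ast) = x/\sqrt{k}$; for $x/\sqrt{k} \in (0, A]$ with $A \le s_\infty$, $t^\ast$ lies in a compact subset of the interior of the domain of $\varphi^\pm$. Under the tilted measure $\mathbb Q$ with $d\mathbb Q/d\P = \exp(t^\ast X_1 - \varphi^\pm(t^\ast))$, the $X_i$ remain iid with mean $x/\sqrt{k}$ and variance $\sigma_\ast^2 := (\varphi^\pm)''(t^\ast)$, and a change of measure rewrites
\[
\P\bigl(S_k^\pm \ge \sqrt{k}\, x\bigr) = e^{-k I^\pm(x/\sqrt{k})}\, \mathbb E_{\mathbb Q}\bigl[e^{-t^\ast W}\mathbbm{1}(W \ge 0)\bigr], \qquad W := S_k^\pm - \sqrt{k}\, x,
\]
so that $W$ is $\mathbb Q$-centered with variance $k\sigma_\ast^2$. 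The remaining task is to show $\mathbb E_{\mathbb Q}[e^{-t^\ast W}\mathbbm{1}(W \ge 0)] \le C_A/(t^\ast \sqrt{k})$. Writing this expectation via integration by parts as $\int_0^\infty t^\ast e^{-t^\ast u}\, \mathbb Q(0 \le W \le u)\,du$ and bounding the concentration function by $\mathbb Q(0 \le W \le u) \le \min\{1,\, C_A u/(\sigma_\ast \sqrt{k})\}$ (a Berry--Esseen-type estimate under $\mathbb Q$) yields the claim by splitting the integral at $u = \sigma_\ast \sqrt{k}$. Finally, since $(\varphi^\pm)'(0) = 0$ and $(\varphi^\pm)''(0) = 1$, monotonicity of $(\varphi^\pm)'$ together with compactness forces $t^\ast \ge c_A \cdot (x/\sqrt{k})$ uniformly on $(0, A]$, so $t^\ast \sqrt{k} \ge c_A x$ and the prefactor becomes $C_A/x$ as claimed.

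The main obstacle is the Berry--Esseen-type bound $\mathbb Q(0 \le W \le u) \le C u/(\sigma_\ast \sqrt{k})$ with a constant $C$ uniform in both $k$ and the tilt parameter $t^\ast$ ranging over the compact subset of the domain of $\varphi^\pm$. Fortunately, every Esscher tilt of the density $f(x) = \mathbbm{1}(x \le 1) e^{x-1}$ remains a smooth, bounded density on $(-\infty, 1]$, so the tilted third absolute moment depends continuously on $t^\ast$ and is uniformly bounded on compacta; a standard Berry--Esseen estimate then supplies the required local anti-concentration bound with a constant depending only on $A$. Threading this uniformity through both the moderate-deviation regime ($t^\ast \to 0$, tilted law close to $N(0,1)$) and the true large-deviation regime ($t^\ast$ bounded away from $0$) with a single constant $C_A$ is the crux, and it is precisely what forces the restriction $A \le s_\infty$ in the statement.
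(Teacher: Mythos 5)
The paper does not actually prove this lemma; it is cited as a classical result of Cram\'er and Bahadur. Your strategy --- Chernoff's inequality for \eqref{eq:rescumulantineq}, then an Esscher tilt combined with a Berry--Esseen estimate of the tilted concentration function for \eqref{eq:cumulantineq} --- is precisely the Bahadur--Rao method those references implement, so you have reconstructed the intended proof.

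There is, however, a genuine gap in your justification of uniformity. You assert that for $x/\sqrt{k} \in (0, A]$ with $A \le s_\infty$ the tilt parameter $t^\ast$ ranges over a compact subset of the interior of the domain of $\varphi^\pm$, and then deduce uniform moment bounds ``on compacta.'' This fails when $A = s_\infty$, which the statement explicitly permits: in the $+$ case $s_\infty = 1$, and since $(\varphi^+)'(t) = t/(1+t)$ one has $t^\ast = s/(1-s) \to \infty$ as $s = x/\sqrt{k} \uparrow 1$, so $t^\ast$ escapes every compact set. Your phrase ``the true large-deviation regime ($t^\ast$ bounded away from $0$)'' misses this: the danger is $t^\ast \to \infty$, not $t^\ast$ small. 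The conclusion nonetheless holds, but for a structural reason rather than compactness: the $t^\ast$-tilt of $f(x) = \mathbbm{1}(x \le 1)e^{x-1}$ is the law of $1 - Y$ with $Y \sim \mathrm{Exp}(t^\ast + 1)$, a pure rescaling of one fixed shape, so the normalized third moment $\rho_\ast/\sigma_\ast^3$ is a universal constant independent of $t^\ast$, and the Berry--Esseen constant is genuinely uniform over all of $t^\ast \in [0,\infty)$. Moreover $\sigma_\ast = 1 - s$, so $t^\ast \sigma_\ast = s$ exactly, which yields the $C_A/x$ prefactor with no slack. You should replace the compactness appeal with this explicit scale-invariance computation --- it, and not compactness, is what lets $A$ reach $s_\infty$ in the $+$ case. (For the $-$ case, $t^\ast = s/(1+s) \le 1/2$ when $s \le 1$, so there the compactness argument is indeed available.)
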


The following result is obtained from a simple application of Theorem 2.4 in \cite{petrov1995limit}, which provides an upper bound of the tail distribution of $\max_{1 \le k \le n}S_k^\pm$ by that of $S_n^\pm$.
\begin{lem}\label{lem:petrov}
We have
\beq
\P\bigg\{\max_{1 \le k \le n}S_k^\pm \ge x\bigg\} \le 2 \P\Big\{ S_n^\pm \ge x - \sqrt{2(n - 1)} \Big\}.
\eeq
\end{lem}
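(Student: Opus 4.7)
The plan is to use the classical first-passage decomposition that underlies Petrov's Theorem~2.4. Introduce the stopping time $T^\pm = \min\{k \ge 1 : S_k^\pm \ge x\}$ (with $\min \emptyset = \infty$), so that the event $\{\max_{1 \le k \le n} S_k^\pm \ge x\}$ is exactly $\{T^\pm \le n\}$, and on $\{T^\pm = k\}$ we have $S_k^\pm \ge x$. Then on this event,
\[
S_n^\pm = S_k^\pm + (S_n^\pm - S_k^\pm) \ge x + (S_n^\pm - S_k^\pm),
\]
so it suffices to control how negative the increment $S_n^\pm - S_k^\pm$ can be.

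The key quantitative input is Chebyshev's inequality applied to $S_n^\pm - S_k^\pm$. Because $X_1$ has mean zero and unit variance (as noted in the preliminaries), for each $1 \le k \le n$ we have $\E(S_n^\pm - S_k^\pm) = 0$ and $\mathrm{Var}(S_n^\pm - S_k^\pm) = n - k \le n - 1$. Hence
\[
\P\bigl\{ S_n^\pm - S_k^\pm \le -\sqrt{2(n-1)} \bigr\} \le \P\bigl\{ |S_n^\pm - S_k^\pm| \ge \sqrt{2(n-k)} \bigr\} \le \tfrac{1}{2},
\]
with the $k = n$ case holding trivially. Equivalently, $\P\{ S_n^\pm - S_k^\pm \ge -\sqrt{2(n-1)} \} \ge \tfrac{1}{2}$ for every $1 \le k \le n$.

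Now partition on the value of $T^\pm$ and use the independence of $\{T^\pm = k\}$ (a function of $X_1,\ldots,X_k$) from $S_n^\pm - S_k^\pm$ (a function of $X_{k+1},\ldots,X_n$):
\[
\P\bigl\{ S_n^\pm \ge x - \sqrt{2(n-1)} \bigr\} \ge \sum_{k=1}^n \P\{T^\pm = k\}\,\P\bigl\{ S_n^\pm - S_k^\pm \ge -\sqrt{2(n-1)} \bigr\} \ge \tfrac{1}{2}\,\P\{T^\pm \le n\}.
\]
Since $\P\{T^\pm \le n\} = \P\{\max_{1 \le k \le n} S_k^\pm \ge x\}$, rearranging yields the claimed inequality. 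There is no genuine obstacle here: the only subtlety is to keep $\sqrt{2(n-1)}$ (rather than $\sqrt{2n}$) by noting that the minimum in $T^\pm$ starts at $k = 1$, so $n - k \le n - 1$ uniformly over the range of $T^\pm$ on $\{T^\pm \le n\}$.
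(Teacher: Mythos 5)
Your proof is correct, and it is essentially a self-contained reconstruction of the classical Lévy--Ottaviani/Petrov maximal-inequality argument: the paper itself gives no proof of Lemma~\ref{lem:petrov} beyond a citation of Theorem~2.4 in Petrov (1995), and what you have written is exactly the standard derivation of that theorem, specialized to the present setting where $X_1$ has mean zero and unit variance. The three ingredients — first-passage decomposition via $T^\pm$, independence of $\{T^\pm = k\}$ from the increment $S_n^\pm - S_k^\pm$, and Chebyshev to guarantee $\P\{S_n^\pm - S_k^\pm \ge -\sqrt{2(n-1)}\} \ge 1/2$ uniformly in $1 \le k \le n$ — are precisely what underlie the cited result, and you have handled the one small point of care (the degenerate $k = n$ case, where Chebyshev with radius $\sqrt{2(n-k)} = 0$ says nothing, but the bound holds trivially since the increment is zero) explicitly and correctly. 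So this is not a different route from the paper's; it is the paper's route with the black box opened.
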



For completeness, we include Lemma 4.4 and 4.5 from \citep{kabluchko2014limiting} below. For integers $r > 0$ and $x < y$, define
\beq\label{T}
\bbT_r(x, y) := \big\{(i, j) \in \bbI: x - r \le i \le x\text{ and }y \le j \le y + r\big\}.
\eeq
\begin{lem}\label{lem:weakcontrolzijplus}
Fix constants $B_1, B_2 > 0$. Then for all $x \in \bbZ$, $l, r \in \bbN$ and all $u > 0$  such that $B_1l > u^2$ and $r \le B_2lu^{-2}$, we have
\beq
\cQ(l, r, u) := \P\bigg(  \max_{i, j \in \bbT_r(x, x + l)}\frac{S_j^+ - S_i^+}{\sqrt{l}}\ge u\bigg) \le \frac{C}{u}\exp\bigg(-\frac{u^2}{2} - \frac{c u^3}{\sqrt{l}} \bigg),
\eeq
where the constants $c$ and $C$ depend on $B_1$ and $B_2$ but do not depend on $x$, $l$, $r$, $u$.
\end{lem}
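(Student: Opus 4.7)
The plan is to exploit the product structure of $\bbT_r(x, x+l)$ by splitting the maximum into a sum of three independent pieces over disjoint index blocks, then combining a moderate-deviation bound on the main piece with an MGF control on the two fluctuation pieces.

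Set $T = S_{x+l}^+ - S_x^+$, $Y = \max_{x-r \le i \le x}(S_x^+ - S_i^+)$, and $Y' = \max_{x+l \le j \le x+l+r}(S_j^+ - S_{x+l}^+)$, so that $Y, Y' \ge 0$. For any $(i,j) \in \bbT_r(x, x+l)$,
\beq
S_j^+ - S_i^+ = T + (S_j^+ - S_{x+l}^+) + (S_x^+ - S_i^+),
\eeq
and since the supremum decouples additively in $i$ and $j$, one obtains the exact identity
\beq
\max_{(i,j) \in \bbT_r(x,x+l)}(S_j^+ - S_i^+) = T + Y + Y'.
\eeq
The three random variables $T, Y, Y'$ are built from pairwise disjoint blocks of $(X_k)$ and are therefore mutually independent. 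Writing $W = Y + Y'$, one has $\cQ(l,r,u) = \P(T + W \ge u\sqrt{l}) = \E[\P(T \ge u\sqrt{l} - W \mid W)]$.

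On the event $\{W \le u\sqrt{l}/2\}$, the Cramer-type bound \eqref{eq:cumulantineq} applied to $T = S_l^+$ at level $u\sqrt{l} - W$, combined with the Taylor expansion $l\,I^+(u/\sqrt{l} - W/l) \ge u^2/2 + u^3/(3\sqrt{l}) - uW/\sqrt{l}$ (valid because all higher-order terms of $I^+$ are nonnegative), yields
\beq
\P(T \ge u\sqrt{l} - W \mid W) \le \frac{2 C_A}{u}\,\exp\bigl(-u^2/2 - u^3/(3\sqrt{l}) + uW/\sqrt{l}\bigr).
\eeq
Taking expectation and factoring via the independence of $Y, Y'$ bounds the contribution of this event by
\beq
\frac{2 C_A}{u}\,\exp\bigl(-u^2/2 - u^3/(3\sqrt{l})\bigr)\,\E[e^{uY/\sqrt{l}}]^2.
\eeq
The MGF factor is $O(1)$ uniformly in the parameter range: from Lemma \ref{lem:petrov} and a direct integration one gets $\E[e^{\lambda Y}] \le 1 + 2\,e^{\lambda\sqrt{2r}}\,e^{r\varphi^+(\lambda)}$, and setting $\lambda = u/\sqrt{l}$ together with $r \le B_2 l/u^2$ gives $\lambda\sqrt{2r} \le \sqrt{2 B_2}$ and $r\,\varphi^+(\lambda) \le r\lambda^2/2 \le B_2/2$ (using $\varphi^+(\lambda) \le \lambda^2/2$, valid for $\lambda \le 1$, ensured by $B_1 \le 1$, to which one may restrict without loss of generality).

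The complementary tail $\{W > u\sqrt{l}/2\}$ is handled via Lemma \ref{lem:petrov} followed by \eqref{eq:rescumulantineq} applied to $Y$ and $Y'$ at level $u\sqrt{l}/4$: since $r \le B_2 l/u^2$, already the quadratic part of $r\,I^+(s/r)$ with $s \approx u\sqrt{l}/4$ exceeds a constant times $u^4$, which comfortably dominates $u^2/2 + c\,u^3/\sqrt{l}$ for $u$ larger than some $u_0 = u_0(B_1, B_2)$; the regime $u \le u_0$ is absorbed into the constant $C$. The main obstacle is preserving the sharp coefficient $1/2$ in $\exp(-u^2/2)$: a naive split $\P(T \ge (1-\alpha)u\sqrt{l}) + \P(W \ge \alpha u\sqrt{l})$ with any fixed $\alpha > 0$ would degrade this to $(1-\alpha)^2/2$, which is unacceptable. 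Integrating over $W$ against its MGF, combined with the linearization of the exponent through the Taylor expansion of $I^+$, avoids the loss precisely because $\lambda = u/\sqrt{l}$ is the scale at which $Y$ and $Y'$ have uniformly bounded exponential moment, and this scale is exactly what the hypothesis $r \le B_2 l/u^2$ encodes.
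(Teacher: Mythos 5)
The paper does not prove this lemma — it is Lemma~4.4 of \citet{kabluchko2014limiting}, included only for completeness. Your proposal reconstructs, essentially correctly, the Kabluchko--Wang argument that the paper itself adapts when proving the analogous Lemma~\ref{lem:weakcontroltzijplus}: decompose the maximum over $\bbT_r(x,x+l)$ into the ``main'' increment $T=S_{x+l}^+-S_x^+$ plus two independent one-sided running maxima $Y,Y'$ over the fluctuation blocks, apply a Cram\'er-type bound to $T$, and control $Y,Y'$ via Lemma~\ref{lem:petrov}. The paper's version conditions on $V_{l,u}$ and integrates $G_{l,r,u}$ against $dF_{l,u}$; you instead condition on $W=Y+Y'$ and integrate against the MGF of $W$, which is an equivalent bookkeeping and slightly cleaner. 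The explanation at the end for why a naive split $\P(T\ge(1-\alpha)u\sqrt l)+\P(W\ge\alpha u\sqrt l)$ fails, and why $\lambda=u/\sqrt l$ is exactly the scale encoded by $r\le B_2 l u^{-2}$, is the right way to think about it.

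There is, however, a concrete error in the linchpin Taylor estimate. You claim that on $\{W\le u\sqrt l/2\}$,
\begin{equation*}
l\,I^+\!\Big(\tfrac{u}{\sqrt l}-\tfrac{W}{l}\Big)\ \ge\ \tfrac{u^2}{2}+\tfrac{u^3}{3\sqrt l}-\tfrac{uW}{\sqrt l},
\end{equation*}
justified by ``all higher-order terms of $I^+$ are nonnegative.'' The nonnegativity of the Taylor coefficients of $I^+$ gives $I^+(s)\ge s^2/2+s^3/3$, but upon substituting $s=u/\sqrt l - W/l$ the cubic produces the cross-term $-u^2W/l$, which your stated bound does not account for. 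Writing it out, the claimed inequality is equivalent to $\frac{W}{2l}(W-2u^2)+(\text{positive})\ge 0$, which \emph{fails} whenever $0<W<2u^2$; for instance $u=1$, $W=1$, $l$ large is a counterexample. The fix is standard and does not change the conclusion: on $\{W\le u\sqrt l/2\}$ one has $s\ge u/(2\sqrt l)$, hence $l\,s^3/3\ge u^3/(24\sqrt l)$, which combined with $l\,s^2/2=\tfrac{u^2}{2}-\tfrac{uW}{\sqrt l}+\tfrac{W^2}{2l}\ge\tfrac{u^2}{2}-\tfrac{uW}{\sqrt l}$ gives $l\,I^+(s)\ge\tfrac{u^2}{2}+\tfrac{u^3}{24\sqrt l}-\tfrac{uW}{\sqrt l}$. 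Since the lemma only asserts a constant $c=c(B_1,B_2)$ in the cubic, replacing $1/3$ by $1/24$ is harmless; but the justification must use the floor $s\ge u/(2\sqrt l)$ rather than coefficient positivity alone. Separately, a small glitch: the reduction ``to $B_1\le 1$ without loss of generality'' is neither available (the constants are allowed to depend on $B_1$, not to shrink it) nor needed, since $\varphi^+(\lambda)=\lambda-\log(1+\lambda)\le\lambda^2/2$ already holds for all $\lambda\ge 0$, as one checks by differentiating $\lambda^2/2-\varphi^+(\lambda)$.
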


\begin{lem}\label{lem:interchange}
Let $\nu$, $\nu_n$, $n \in \bbN$, be measures on $[0, \infty)$ which are finite on compact intervals. Let $G$, $G_n$, $n \in 
\bbN$, be measurable functions on $[0, \infty)$ which are uniformly bounded on compact intervals. Assume that
\begin{enumerate}
    \item $\nu_n$ converges to $\nu$ weakly on every interval $[0, t]$, $t \ge 0$;
    \item for $\nu$-a.e. $s \ge 0$, we have $\lim_{n \to \infty} G_n(s_n) = G(s)$, for every sequence $s_n \to s$; 
    \item $\lim_{T \to \infty} \int_T^\infty |G_n|d\nu_n = 0$ uniformly when $n \ge N$ for some $N \in \bbN$.
\end{enumerate}
Then, $\lim_{n \to \infty} \int_0^T G_n d\nu_n = \int_0^T G d\nu$.
\end{lem}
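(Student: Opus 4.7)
The plan is to prove that $\int_0^\infty G_n\,d\nu_n \to \int_0^\infty G\,d\nu$ by separately controlling the integrals on a compact head $[0,T]$ and on the tail $[T,\infty)$, with $T$ chosen to be a continuity point of $\nu$. Such $T$ exist arbitrarily large because $\nu$ is locally finite and hence has at most countably many atoms on any bounded set.

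For the head, the tool of choice is Skorokhod's representation theorem. By condition 1, $\nu_n|_{[0,T]}$ converges weakly to $\nu|_{[0,T]}$; since $T$ is a continuity point of $\nu$, the total masses also converge: $\nu_n([0,T]) \to \nu([0,T])$. Normalize these to probability measures $\mu_n$ and $\mu$, and realize them on a common probability space as $Y_n \sim \mu_n$ and $Y \sim \mu$ with $Y_n \to Y$ almost surely. By condition 2, $G_n(s_n) \to G(s)$ for every sequence $s_n \to s$ at every $s$ in a $\nu$-full subset of $[0,T]$; since $\mu$ is absolutely continuous with respect to $\nu|_{[0,T]}$, this set is also $\mu$-full, so $G_n(Y_n) \to G(Y)$ almost surely. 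Uniform boundedness of $\{G_n\}$ on $[0,T]$ combined with the bounded convergence theorem yields $\int G_n\,d\mu_n \to \int G\,d\mu$, and unwinding the normalization gives
\[
\int_0^T G_n\,d\nu_n \longrightarrow \int_0^T G\,d\nu.
\]

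For the tail, applying the head argument to $|G_n|$ and $|G|$ shows $\int_0^T |G|\,d\nu = \lim_n \int_0^T |G_n|\,d\nu_n$, and together with condition 3 this forces $\int_0^\infty |G|\,d\nu < \infty$, hence $\int_T^\infty |G|\,d\nu \to 0$ as $T \to \infty$ by monotone convergence. For any $\varepsilon > 0$, choose a continuity point $T$ of $\nu$ large enough that both $\sup_{n \ge N} \int_T^\infty |G_n|\,d\nu_n < \varepsilon$ and $\int_T^\infty |G|\,d\nu < \varepsilon$. The triangle inequality gives
\[
\bigg| \int_0^\infty G_n\,d\nu_n - \int_0^\infty G\,d\nu \bigg| \le \bigg| \int_0^T G_n\,d\nu_n - \int_0^T G\,d\nu \bigg| + 2\varepsilon,
\]
and the head step sends the first term on the right to $0$ as $n \to \infty$. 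Since $\varepsilon$ was arbitrary, the result follows.

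The main obstacle is matching the continuous-convergence hypothesis (condition 2) against the almost-sure convergence $Y_n \to Y$ produced by Skorokhod: one must confirm that the $\nu$-null exceptional set where condition 2 can fail is not visited by $Y$ almost surely, which holds because $Y$ has law proportional to $\nu|_{[0,T]}$. The remaining delicate point is selecting $T$ at a continuity point of $\nu$ so that total masses converge, and handling the routine passage $T \to \infty$ for the tail bound.
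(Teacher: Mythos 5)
The paper does not prove this lemma: it is quoted verbatim from Lemma~4.5 of \citet{kabluchko2014limiting} purely ``for completeness,'' so there is no in-paper proof to compare against. Two remarks are in order. First, the conclusion as typeset in the paper, $\lim_{n\to\infty}\int_0^T G_n\,d\nu_n = \int_0^T G\,d\nu$, contains a typo; it should read $\int_0^\infty$, since a conclusion restricted to a fixed compact would make hypothesis~3 vacuous and would not be the statement used downstream in the paper (e.g.\ the computation of $\cQ_n$). You correctly read it as $\int_0^\infty$.

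Your proof is correct. The Skorokhod-representation route for the head $[0,T]$ is a clean way to convert the ``continuous convergence along all sequences'' hypothesis (condition~2) together with weak convergence of the normalized measures into almost-sure convergence $G_n(Y_n)\to G(Y)$, after which bounded convergence finishes it; the one delicate point, that the $\nu$-null exceptional set of condition~2 is not charged by $Y$ because $\mu \ll \nu|_{[0,T]}$, is exactly the one you flag. Choosing $T$ to be a continuity point of $\nu$ (possible because a locally finite measure has at most countably many atoms) is what makes $\nu_n([0,T])\to\nu([0,T])$, so the normalization step is legitimate. Applying the same head argument to $|G_n|$, $|G|$ (their continuous-convergence property is inherited from condition~2 via continuity of the absolute value), combining with condition~3 to get $\int_0^\infty|G|\,d\nu<\infty$, and finishing with the usual triangle-inequality split is sound. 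The only place worth spelling out is that you also need $\sup_{n\ge N}\int_0^\infty |G_n|\,d\nu_n < \infty$, which follows from uniform boundedness of $\{G_n\}$ on a large continuity-point interval, the boundedness of $\nu_n([0,T_0])$, and condition~3; you use this implicitly when asserting $\int_0^\infty|G|\,d\nu<\infty$. In terms of comparison, the original proof in \citet{kabluchko2014limiting} works more directly with approximation and a Portmanteau-type argument rather than invoking Skorokhod; your version is somewhat more conceptual and arguably cleaner, at the cost of invoking a heavier black box. Either way, the key ideas (splitting head/tail at a continuity point, bounded convergence on the head, uniform-in-$n$ smallness of the tail) are the same.
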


We also provide an upper bound of the tail distribution $\max_{i, j \in \bbT_r(x, x + l)}(S_j^- - S_i^-)/\sqrt{l}$ also, which is cruder than its counterpart for $S_k^+$ in Lemma \ref{lem:weakcontrolzijplus} but shall suffice for our purposes.
\begin{lem}\label{lem:weakcontrolzijminus}
For all $x \in \bbZ$, $l, r \in \bbN^+$ and all $u > 40$  such that $l > u^2r$ and $r > 10u^2$, we have
\beq
\cQ(l, r, u) := \P\bigg(  \max_{i, j \in \bbT_r(x, x + l)}\frac{S_j^- - S_i^-}{\sqrt{l}}\ge u\bigg) \le C\exp\bigg(-\frac{u^2}{3}\bigg),
\eeq
where the constant $C$ does not depend on $x$, $l$, $r$, $u$.
\end{lem}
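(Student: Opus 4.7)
The plan is to exploit stationarity and independence of increments. For any $(i,j) \in \bbT_r(x, x+l)$, telescope
\[
S_j^- - S_i^- = (S_x^- - S_i^-) + (S_{x+l}^- - S_x^-) + (S_j^- - S_{x+l}^-).
\]
Since the three pieces are built from disjoint blocks of the underlying $X_k$'s, they are independent. Taking the supremum over $(i,j)$, the outer pieces become
\[
M_1 = \max_{i \in [x-r,\,x]}(S_x^- - S_i^-), \qquad M_2 = \max_{j \in [x+l,\,x+l+r]}(S_j^- - S_{x+l}^-),
\]
each distributed as $\max_{0 \le k \le r} S_k^-$, while the middle piece $T = S_{x+l}^- - S_x^-$ is distributed as $S_l^-$. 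A union bound then yields, for any $\alpha \in (0,1)$,
\[
\cQ(l,r,u) \le \P\bigl(T \ge \alpha u \sqrt{l}\bigr) + 2\,\P\Bigl(M_1 \ge \tfrac{1-\alpha}{2}\, u \sqrt{l}\Bigr).
\]

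For the dominant first term I would apply the Chernoff estimate \eqref{eq:rescumulantineq} together with the expansion $I^-(s) = s^2/2 - s^3/3 + o(s^3)$ (which follows from $\log(1+y) \le y - y^2/2 + y^3/3$ for $y \ge 0$), giving
\[
l\, I^-(\alpha u/\sqrt{l}) \;\ge\; \frac{\alpha^2 u^2}{2} - \frac{\alpha^3 u^3}{3\sqrt{l}}.
\]
Under the hypotheses $l > u^2 r$ and $r > 10u^2$, we have $u^3/\sqrt{l} \le u/\sqrt{10}$, so the cubic correction is merely linear in $u$. Choosing, for instance, $\alpha = 9/10$ (so $\alpha^2/2 = 0.405 > 1/3$) and using $u > 40$ to absorb the linear correction delivers $\P(T \ge \alpha u \sqrt{l}) \le \exp(-u^2/3)$.

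For the two residual maxima I would invoke Lemma~\ref{lem:petrov} to bound $\P(M_1 \ge y)$ by $2\,\P(S_r^- \ge y - \sqrt{2(r-1)})$ with $y = (1-\alpha)u\sqrt{l}/2$; because $l > u^2 r$ and $u > 40$, the offset $\sqrt{2(r-1)}$ is negligible compared with $y$. Another application of Chernoff, combined with the elementary inequality $I^-(s) \ge s^2/(2(1+s))$ on $s \ge 0$, then yields an exponent $z^2/(2(r+z))$ with $z \approx y$; splitting into the regimes $z \le r$ and $z > r$ shows that this exponent is of order $u^4$ or of order $u^3$, respectively, each vastly exceeding $u^2/3$. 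The main book-keeping obstacle is to fix the split parameter $\alpha$ large enough that the cubic correction in the expansion of $l\,I^-(\alpha u/\sqrt{l})$ does not eat into the target exponent $u^2/3$, while $1-\alpha$ stays big enough to make the residual maxima negligible; the quantitative thresholds $u > 40$, $r > 10u^2$, $l > u^2 r$ in the statement are precisely what makes this trade-off close with room to spare.
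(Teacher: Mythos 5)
Your argument is correct and takes essentially the same route as the paper: decompose each increment over $\bbT_r(x, x+l)$ into a central partial sum of length $l$ plus two independent edge maxima over blocks of length $r$, allocate the threshold via a split parameter, bound the central term with the Chernoff estimate \eqref{eq:rescumulantineq} and a lower bound on $I^-$, and bound the edge maxima with Lemma~\ref{lem:petrov} followed by another Chernoff estimate. The only differences are bookkeeping: the paper uses the length-adaptive split $\alpha = 1 - \sqrt{r/l}$ (via the auxiliary variable $V_{l,u}$) together with $I^-(s) \ge 1.01 s^2/3$ on $[0,0.5]$, whereas you fix $\alpha = 9/10$ and use $I^-(s) \ge s^2/2 - s^3/3$; both close comfortably under the stated hypotheses $u>40$, $r>10u^2$, $l>u^2r$.
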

\begin{proof}
Before we proceed into the proof, one fact about $I^-(s)$ is
\beq\label{eq:iminusbound}
I^-(s) \ge \frac{1.01s^2}{3}, ~~~0 \le s \le 0.5,
\eeq
which can be easily checked. Define $V_{l,u} :=  u^2 - uS_l^-/\sqrt{l}$, $S_{k_1}^{(1)-}$ and $S_{k_2}^{(2)-}$ to be two partial sums of $-X_i$ independent of each other and $S_l^-$. With translation invariance, we bound $\cQ(l, r, u)$ as follows,
\begin{align*}
\cQ(l, r, u) &= \P\bigg(  \max_{i, j \in \bbT_r(0, 0 + l)}\frac{S_j^- - S_i^-}{\sqrt{l}}\ge u\bigg)\\
&= \P\bigg(\max_{0 \le k_1, k_2 \le r} \frac{S_{k_1}^{(1)-} + S_{k_2}^{(2)-}}{\sqrt{l}} + \frac{S_l^-}{\sqrt{l}} \ge u\bigg)\\
&= \P\bigg(\max_{0 \le k_1, k_2 \le r} \frac{S_{k_1}^{(1)-} + S_{k_2}^{(2)-}}{\sqrt{l}}  \ge \frac{V_{l, u}}{u}\bigg)\\
&\le \P\bigg(\max_{0 \le k_1, k_2 \le r} \frac{S_{k_1}^{(1)-} + S_{k_2}^{(2)-}}{\sqrt{l}} \ge \frac{V_{l, u}}{u}, V_{l, u} \le u^2\sqrt{\frac{r}{l}}\bigg)\\
&+ \P\bigg(\max_{0 \le k_1, k_2 \le r} \frac{S_{k_1}^{(1)-} + S_{k_2}^{(2)-}}{\sqrt{l}} \ge \frac{V_{l, u}}{u}, V_{l, u} > u^2\sqrt{\frac{r}{l}}\bigg)\\
&\le \P(V_{l, u} \le u^2\sqrt{r/l})+ \P\bigg(\max_{0 \le k_1, k_2 \le r} \frac{S_{k_1}^{(1)-} + S_{k_2}^{(2)-}}{\sqrt{l}} > u\sqrt{\frac{r}{l}}\bigg),
\end{align*}
where we bound these two terms individually. By the assumptions on $u, l, r$, we have $u(1 - \sqrt{r/l})/\sqrt{l} \le 0.5$. Thus with \eqref{eq:rescumulantineq} and \eqref{eq:iminusbound}, we have
\beq
\P\bigg(V_{l, u} \le u^2\sqrt{\frac{r}{l}}\bigg) = \P\bigg(\frac{S_l^-}{\sqrt{l}} \ge u - u\sqrt{\frac{r}{l}}\bigg) \le \exp\bigg[ -lI^+\bigg\{\frac{u(1 - \sqrt{r/l})}{\sqrt{l}}\bigg\}\bigg] \le \exp\bigg(-\frac{u^2}{3}\bigg).
\eeq
Now we switch to the second item, with Lemma \ref{lem:petrov}, \eqref{eq:rescumulantineq} and assumption that $r > 10u^2$, $u > 40$,
\begin{align*}
\P\bigg(\max_{0 \le k_1, k_2 \le r} \frac{S_{k_1}^{(1)-} + S_{k_2}^{(2)-}}{\sqrt{l}} \ge u\sqrt{\frac{r}{l}}\bigg) &\le 2\P\bigg(\max_{0 \le k \le r} \frac{S_k^-}{\sqrt{r}} \ge \frac{u}{2}\bigg)\\
&\le 4\P\bigg(\frac{S_r^-}{\sqrt{r}} \ge \frac{u}{2} - \sqrt{2}\bigg)\\
&\le C\exp\bigg\{-rI^-\bigg(\frac{u - 2\sqrt{2}}{2\sqrt{r}}\bigg)\bigg\}\\
&\le C\exp\bigg(-\frac{u^2}{3}\bigg).
\end{align*}
Putting the two terms together, we get the stated bound.
\end{proof}
We now adjust the Lemma \ref{lem:weakcontrolzijplus} to suit for proving Theorem \ref{thm:tmnplus}, in which we need to deal with
\beq\label{eq:tzij}
\tilde Z_{i, j}^+ := \frac{S_j^+ - S_i^+}{\sqrt{j - i - (S_j^+ - S_i^+)}}.
\eeq
Define a function
\beq
\phi(x) = \frac{x}{\sqrt{1 - x}}, ~~x < 1,
\eeq
and thus we have
\beq
\frac{\tilde Z_{i, j}^+}{\sqrt{j - i}} = \phi\bigg(\frac{Z_{i, j}^+}{\sqrt{j - i}}\bigg).
\eeq
Since $\phi(x)$ is strictly increasing on $(-\infty, 1)$ with range $\bbR$, we write its inverse function as
\beq\label{eq:gplus}
g^+(x) := \frac{1}{2}(x\sqrt{x^2 + 4} - x^2), ~~x \in \bbR,
\eeq
which is also strictly increasing. Therefore, $\tilde Z_{i, j}^+ \ge u$ if and only if
\beq\label{eq:tzijtransform}
 Z_{i, j}^+ \ge \sqrt{j - i} \cdot g^+\bigg(\frac{a}{\sqrt{j - i}}\bigg).
\eeq
This is an important transformation which enables us to deal with $Z_{i, j}^+$ instead. We compute the Taylor expansion of $I^+(g^+(s))$ at $s = 0$,
\beq\label{eq:iplusgplustaylor}
I^+(g^+(s)) = \frac{s^2}{2} - \frac{s^3}{6} + O(s^4).
\eeq
We have
\begin{lem}\label{lem:weakcontroltzijplus}
Fix constants $B_1$, $B_2 > 0$. Then for all $x \in \bbZ$, $l,r \in \bbN$ and all $u > 0$ such that $B_1l > u^2$ and $r < B_2lu^{-2}$, we have
\beq\label{eq:weakcontroltzijplus}
\cQ(l, r, u) := \P\bigg(\max_{(i, j) \in \bbT_r(x, x + l)} \tilde Z_{i, j}^+ \ge u\bigg) \le \frac{C}{u} \exp\bigg(-\frac{u^2}{2} + \frac{cu^3}{\sqrt{l}}\bigg),
\eeq
where the constants $c, C > 0$ depend on $B_1$ and $B_2$ but do not depend on $x,l,r,u$.
\end{lem}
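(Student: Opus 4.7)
My plan is to reduce the estimate to Lemma \ref{lem:weakcontrolzijplus} by inverting the transformation \eqref{eq:tzijtransform}, which expresses the event $\tilde Z_{i,j}^+ \ge u$ as $S_j^+ - S_i^+ \ge (j-i)\, g^+\bigl(u/\sqrt{j-i}\bigr)$. The annoyance is that this threshold depends on $j-i$; I would replace it by the worst-case value over $(i,j) \in \bbT_r(x, x+l)$, which forces $m := j-i \in \{l, l+1, \ldots, l+2r\}$.

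First I would establish the monotonicity claim that $m \mapsto m\, g^+(u/\sqrt{m})$ is non-decreasing in $m$, via a short derivative computation showing $(g^+(s)/s^2)' < 0$ for $s > 0$. This yields the uniform lower bound $(S_j^+ - S_i^+)/\sqrt{l} \ge u^* := \sqrt{l}\, g^+(u/\sqrt{l})$ whenever $\tilde Z_{i,j}^+ \ge u$ and $(i,j) \in \bbT_r(x, x+l)$, and hence
$$
\cQ(l, r, u) \;\le\; \P\bigg( \max_{(i,j) \in \bbT_r(x, x+l)} \frac{S_j^+ - S_i^+}{\sqrt{l}} \ge u^*\bigg).
$$
Since $g^+(s) \le s$, we have $u^* \le u$, so the hypotheses $B_1 l > (u^*)^2$ and $r < B_2 l (u^*)^{-2}$ are inherited from those assumed on $u$, and Lemma \ref{lem:weakcontrolzijplus} applies with $u$ replaced by $u^*$.

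The last step is to convert the resulting bound $(C/u^*)\exp\bigl(-(u^*)^2/2 - c(u^*)^3/\sqrt{l}\bigr)$ into the claimed form. The cubic term is non-positive and may be discarded. Using the explicit formula for $g^+$, one checks that $s - g^+(s) \le s^2/2$, which combined with $s + g^+(s) \le 2s$ gives $u^2 - (u^*)^2 = l\bigl(s^2 - g^+(s)^2\bigr) \le u^3/\sqrt{l}$ with $s = u/\sqrt{l}$; the hypothesis $u/\sqrt{l} \le \sqrt{B_1}$ also ensures $g^+(s)/s = (\sqrt{s^2+4}-s)/2$ is bounded below by a positive constant depending only on $B_1$, so $1/u^* \le C_{B_1}/u$. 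Substituting produces \eqref{eq:weakcontroltzijplus} with constants depending only on $B_1, B_2$.

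The one point worth flagging is the sign: because $g^+(s) \le s$ we have $u^* \le u$, so we \emph{lose} rather than gain a cubic correction in the exponent, producing the $+cu^3/\sqrt{l}$ that distinguishes this estimate from Lemma \ref{lem:weakcontrolzijplus}. Beyond the monotonicity check, the rest is routine bookkeeping.
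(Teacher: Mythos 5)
Your proof is correct and takes a genuinely different route from the paper. After the common first step (use the transformation $\tilde Z_{i,j}^+ \ge u \iff S_j^+ - S_i^+ \ge (j-i)g^+(u/\sqrt{j-i})$ and the monotonicity of $m \mapsto m\,g^+(u/\sqrt{m})$, equivalently the decreasingness of $g^+(s)/s^2$, to replace the $(i,j)$-dependent threshold by the worst case $l\,g^+(u/\sqrt{l})$), the paths diverge. You then invoke Lemma~\ref{lem:weakcontrolzijplus} directly with the surrogate level $u^* = \sqrt{l}\,g^+(u/\sqrt{l}) \le u$, check that the hypotheses are inherited because $u^* \le u$, discard the favorable cubic term, and convert back using the elementary bounds $s - g^+(s) \le s^2/2$, $s + g^+(s) \le 2s$, and $g^+(s)/s \ge (\sqrt{B_1 + 4} - \sqrt{B_1})/2 > 0$ on $(0, \sqrt{B_1})$; all of these check out, and the resulting bound is of the required form with $c = 1/2$. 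The paper instead re-runs the Kabluchko--Wang machinery from scratch for the Studentized increments: it defines $V_{l,u}$, conditions on it to write $\cQ = F_{l,u}(0) + \int_0^\infty G_{l,r,u}\,dF_{l,u}$, bounds $F_{l,u}(s) \le \frac{C e^s}{u}\exp(-u^2/2 + cu^3/\sqrt{l})$ via the moderate-deviation estimate and the Taylor expansion $I^+(g^+(s)) = s^2/2 - s^3/6 + O(s^4)$, bounds $G_{l,r,u}(s) \le Ce^{-cs^2}$ via Lemma~\ref{lem:petrov}, and sums. Your reduction is shorter and more modular since it reuses the stated Lemma~\ref{lem:weakcontrolzijplus} as a black box; the paper's self-contained derivation would, if one cared, give a slightly smaller constant in front of $u^3/\sqrt{l}$ (closer to the Taylor coefficient $1/6$ than your $1/2$), but the lemma only asserts existence of some $c > 0$, so nothing is lost. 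You correctly flag the sign reversal: because $u^* \le u$, the conversion $-(u^*)^2/2 \le -u^2/2 + u^3/(2\sqrt{l})$ is where the $+cu^3/\sqrt{l}$ appears.
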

\begin{proof}
By the transformation \eqref{eq:tzijtransform}, translation invariance and the fact that $g^+(x)/x^2$ is strictly decreasing, 
\begin{align}
\cQ(l, r, u) & = \P\bigg(\max_{(i, j) \in \bbT_r(0, l)} \tilde Z_{i, j}^+ \ge u\bigg) \\
&= \P\bigg[\max_{0 \le k_1, k_2 \le r} \bigg\{S_{k_1}^{(1)+} + S_{k_2}^{(2)+} -  (l + k_1 + k_2) \cdot g^+\bigg(\frac{u}{\sqrt{l + k_1 + k_2}}\bigg)\bigg\} + S_{l}^+ \ge 0\bigg]\\
&\le \P\bigg[\max_{0 \le k_1, k_2 \le r} \bigg\{S_{k_1}^{(1)+} + S_{k_2}^{(2)+} \bigg\}- l \cdot g^+\bigg(\frac{u}{\sqrt{l}}\bigg) + S_{l}^+ \ge 0\bigg], \label{eq:tzijdecompose}
\end{align}
where $S_{k_1}^{(1)+}$, $S_{k_2}^{(2)+}$ are two partial sums of $X_i$ independent of each other and $S_l^+$. Define 
\beq\label{eq:vludefn}
V_{l, u} = u\bigg(u - \frac{S_l^+}{\sqrt{l - S_l^+}}\bigg).
\eeq
Thus
\beq
\frac{S_l^+}{\sqrt{l - S_l^+}} = \frac{l \cdot S_l^+/l}{\sqrt{l}\sqrt{1 - S_l^+/l}} = \sqrt{l} \cdot \phi\bigg(\frac{S_l^+}{l}\bigg) = u - \frac{V_{l, u}}{u},
\eeq
which gives
\beq
S_l^+ = l\cdot g^+\bigg(\frac{u - V_{l, u}/u}{\sqrt{l}}\bigg).
\eeq
Therefore,
\begin{align}
&\cQ(l,r,u) \\
&\le \P\bigg[\max_{0 \le k_1, k_2 \le r} \bigg\{S_{k_1}^{(1)+} + S_{k_2}^{(2)+} \bigg\}- l \cdot g^+\bigg(\frac{u}{\sqrt{l}}\bigg) + l\cdot g^+\bigg(\frac{u - V_{l, u}/u}{\sqrt{l}}\bigg) \ge 0, V_{l, u} \le 0\bigg]\\
&+\P\bigg[\max_{0 \le k_1, k_2 \le r} \bigg\{S_{k_1}^{(1)+} + S_{k_2}^{(2)+} \bigg\}- l \cdot g^+\bigg(\frac{u}{\sqrt{l}}\bigg) + l\cdot g^+\bigg(\frac{u - V_{l, u}/u}{\sqrt{l}}\bigg) \ge 0, V_{l, u} > 0\bigg]\\
&= \P(V_{l, u} \le 0) \\
&+\P\bigg[\max_{0 \le k_1, k_2 \le r} \bigg\{S_{k_1}^{(1)+} + S_{k_2}^{(2)+} \bigg\}- l \cdot g^+\bigg(\frac{u}{\sqrt{l}}\bigg) + l\cdot g^+\bigg(\frac{u - V_{l, u}/u}{\sqrt{l}}\bigg) \ge 0, V_{l, u} > 0\bigg]\\
&= F_{l,u}(0)+ \int_0^\infty G_{l,r,u}(s)dF_{l,u}(s),\label{eq:tqlrudecompose}
\end{align}
where the last equality is obtained by conditioning on $V_{l,u} = s$, which is independent of $S_{k_1}^{(1)+}$, $S_{k_2}^{(2)+}$. $F_{l, u}$ therein is the probability distribution of $V_{l, u}$ and
\begin{align*}
G_{l, r, u}(s) :=& \P\bigg[\max_{0 \le k_1, k_2 \le r} \bigg\{ S_{k_1}^{(1)+} + S_{k_2}^{(2)+} \bigg\}- l \cdot g^+\bigg(\frac{u}{\sqrt{l}}\bigg)+ l\cdot g^+\bigg(\frac{u - s/u}{\sqrt{l}}\bigg) \ge 0\bigg],
\end{align*}
which is decreasing. To obtain an upper bound for $\cQ(l,r,u)$, first we bound $F_{l,u}(s)$ for $s \in [0, \frac{3}{4}u^2]$ so that $u - s/u \in [u/4, u]$. Applying \eqref{eq:cumulantineq},
\begin{align*}
F_{l,u}(s) &= \P\bigg(\frac{S_l^+}{\sqrt{l - S_l^+}} \ge u - \frac{s}{u}\bigg)\\
&= \P\bigg\{\frac{S_l^+}{\sqrt{l}} \ge \sqrt{l} \cdot g^+\bigg(\frac{u - s/u}{\sqrt{l}}\bigg)\bigg\}\\
& \le C\bigg\{\sqrt{l}\cdot g^+\bigg(\frac{u - s/u}{\sqrt{l}}\bigg)\bigg\}^{-1}\exp\bigg[-l\cdot I^+\bigg\{g^+\bigg(\frac{u - s/u}{\sqrt{l}}\bigg)\bigg\}\bigg]\\
& \le \frac{C}{u}\exp\bigg[-l\cdot I^+\bigg\{g^+\bigg(\frac{u - s/u}{\sqrt{l}}\bigg)\bigg\}\bigg],
\end{align*}
where the last inequality follows from the fact that when $0 < x < 1$,
\beq
xg^+\bigg(\frac{1}{x}\bigg) > \frac{1}{2}.
\eeq
By Taylor expansion of $I^+(g^+(s))$, we have
\begin{align}
F_{l, u}(s) &\le \frac{C}{u}\exp\bigg\{-\frac{1}{2}\bigg(u - \frac{s}{u}\bigg)^2 +  \frac{c}{2\sqrt{l}}\bigg(u - \frac{s}{u}\bigg)^3\bigg\}\nonumber\\
&\le \frac{Ce^s}{u}\exp\bigg(-\frac{u^2}{2} + \frac{c u^3}{\sqrt{l}}\bigg).\label{eq:tflucontrol}
\end{align}
It is however easy to see that this inequality continues to hold for $s \ge \frac{3}{4}u^2$. Indeed, if $c$ is sufficiently small, then the assumption $B_1l > u^2$ implies that $cu^3/\sqrt{l} \le u^2/8$. Hence, when $s \ge \frac{3}{4}u^2$, the above inequality becomes
\beq
F_{l, u}(s) \le \frac{C}{u}\exp\bigg(\frac{3u^2}{8}\bigg).
\eeq
If $C$ is sufficiently large, the right-hand side of previous inequality is greater than $1$ and hence the inequality trivially holds. We bound $G_{l,r,u}(s)$ for $s \ge 0$,
\begin{align*}
G_{l,r,u}(s) 
&\le \P\bigg\{\max_{0 \le k_1, k_2 < r} S_{k_1}^{(1)+} + S_{k_2}^{(2)+} > \frac{s}{2u}\sqrt{\bigg(u - \frac{s}{u}\bigg)^2 + 4l} + \frac{s^2}{2u^2} - s\bigg\}\\
&\le 2\P\bigg\{\max_{0 \le k < r} S_{k}^+ > \frac{s}{4u}\sqrt{\bigg(u - \frac{s}{u}\bigg)^2 + 4l} - \frac{s}{2}\bigg\}\\
&\le 2\P\bigg\{\max_{0 \le k < r} S_{k}^+ > \frac{s}{2u}\sqrt{l} - \frac{s}{2}\bigg\}. 
\end{align*}
Applying the Lemma \ref{lem:petrov} to the above equation we obtain
\begin{align*}
G_{l,r,u}(s) &\le 4\P\bigg(S_r^+ > \frac{s}{2u}\sqrt{l} -\frac{s}{2} - \sqrt{2r}\bigg)\\
&\le 4\P\bigg(\frac{S_r^+}{\sqrt{r}} > \frac{s}{2u\sqrt{r}}\sqrt{l}- \frac{s}{2\sqrt{r}} - \sqrt{2}\bigg)\\
&\le 4\exp\bigg\{-rI^+\bigg(\frac{cs - \sqrt{2}}{\sqrt{r}}\bigg)\bigg\}.
\end{align*}
In the second inequality, we used the assumption $r < B_2lu^{-2}$. By noticing the fact that $I^+(s) \ge s^2/2$, we have
\beq\label{eq:tglrucontrol}
G_{l,r,u}(s) \le Ce^{-cs^2}.
\eeq
Strictly speaking, this is valid only as long as $cs \ge \sqrt{2}$, however, we can choose the constant $C$ so large that \eqref{eq:tglrucontrol} continues to hold in the case $cs < \sqrt{2}$. To obtain \eqref{eq:weakcontroltzijplus}, by \eqref{eq:tqlrudecompose}, \eqref{eq:tflucontrol}, \eqref{eq:tglrucontrol}, it is clear that
\begin{align*}
\cQ(l, r, u) &\le F_{l,u}(0) + \sum_{k = 0}^\infty G_{l, r, u}(k)F_{l, u}(k + 1)\\
&\le \frac{C}{u}\bigg(1 + \sum_{k = 0}^\infty e^{-ck^2}e^k\bigg)\exp\bigg(-\frac{u^2}{2} + \frac{cu^3}{\sqrt{l}}\bigg)\\
&\le \frac{C}{u}\exp\bigg(-\frac{u^2}{2} + \frac{cu^3}{\sqrt{l}}\bigg).
\end{align*}
\end{proof}

\subsection{Proof of \thmref{mnplus} and \thmref{mnminus}}
The roadmap of our proof.
We know that $(U_{(1)}, U_{(2)}, \dots, U_{(n)})$ has the same distribution as 
\beq
\Big(\frac{Y_1}{\sum_{i = 1}^{n + 1}Y_i}, \frac{Y_1 + Y_2}{\sum_{i = 1}^{n + 1}Y_i}, \dots, \frac{\sum_{i = 1}^n Y_i}{\sum_{i = 1}^{n + 1}Y_i}\Big), \quad \text{where $Y_1, \dots, Y_{n+1}$ are iid exponential.}
\eeq 
In particular, $Y_i$ can be set as $1 - X_i$. We use this fact, together with a comparison of $\sum_{i = 1}^{n + 1}Y_i$ with its mean using a central limit theorem, to deal with the dependency among order statistics above, effectively reducing the problem to partial sums of iid random variables.
We then divide the intervals into smaller intervals, which end up contributing the most to the maximum, and larger intervals, whose contribution we show to be negligible. Although $\ui$ and $Y_i$ may be defined on different probability spaces with different probability measure, we may switch between them when there is no confusion. 
Because we only prove convergence in distribution, from now on, we put $\uj = \sum_{i = 1}^jY_i/ \sum_{i = 1}^{n + 1}Y_i$ throughout the proof.
\subsubsection{Proof of (\ref{result:mnplus})}
We study the asymptotic behavior of the statistic based on different regions of $j - i$. For  $b > 0$, define the event
\begin{align*}
A_{i, j}^{n+}(b) & = \bigg\{\frac{j - i - n(\uj - \ui)}{\sqrt{(j - i)(1 - \frac{j - i}{n})}} \le b\bigg\}\\
&=\bigg\{\uj - \ui \ge \frac{j - i}{n} - \frac{b}{\sqrt{n}} w_{i, j}^n \bigg\},
\end{align*}
where
\beq\label{w}
w_{i, j}^n := \sqrt{\frac{j - i}{n}\bigg(1 - \frac{j - i}{n}\bigg)}.
\eeq
Under this notation, we have
\beq
\big\{M_n^+ \le b\big\} = \bigcap_{0 \le i < j \le n}A^{n+}_{i, j}(b).
\eeq

Define 
\beq
u_n(\tau)  =  \bigg(1 + \frac{-3\log\log n + 2\tau}{4 \log n}  \bigg) \sqrt{2 \log n}.
\eeq 
Throughout the proof, we abbreviate $u_n(\tau)$ as $u_n$ with $\tau$ fixed.  With this choice, we have $u_n \sim \sqrt{2\log n}$.

\paragraph{Step 1: Upper bound} 
For the upper bound, it suffices to focus on the optimal range so that the maximum is achieved.  This turns out to be at $j - i \propto (\log n)^3$, as discussed below. 

Define the events
\beq\label{eq:omegancn}
\Omega_n =  \big\{|S_{n + 1}^+| \le (\log\log n) \sqrt{n} \big\}.
\eeq
By the central limit theorem, 
\beq\label{eq:omegan}
\P(\Omega_n) \to 1 ~\mbox{ as }~ n\to \infty. 
\eeq
When $j - i \le \frac{n}{\log n\log\log n}$,
\begin{align*}
&A_{i, j}^{n+}(u_n) \\
&\subseteq \Omega_n^\comp\bigcup \{ \Omega_n \bigcap A_{i, j}^{n+}(u_n)\} \\
&= \Omega_n^\comp\bigcup\bigg(\Omega_n \bigcap \bigg\{\frac{j - i - S_j^+ + S_i^+}{n + 1 - S_{n + 1}^+}\ge \frac{j - i}{n} - \frac{u_n}{\sqrt{n}}w_{i,j}^n\bigg\}\bigg)\\
&\subseteq \Omega_n^\comp\bigcup\bigg(\Omega_n \bigcap\bigg\{S_j^+ - S_i^+ \le (j - i)\frac{-1 + S_{n + 1}^+}{n} + \frac{u_n}{\sqrt{n}}(n + 1 - S_{n + 1}^+)w_{i,j}^n\bigg\}\bigg)\\
&\subseteq \Omega_n^\comp\bigcup \bigg\{S_j^+ - S_i^+ \le (j - i)\frac{\log\log n}{\sqrt{n}} + \frac{u_n}{\sqrt{n}}(n + 1 + (\log\log n)\sqrt{n})w_{i,j}^n\bigg\}\\
&= \Omega_n^\comp\bigcup\bigg\{Z_{i,j}^+ \le \frac{(\log\log n)}{\sqrt{n}}\sqrt{j - i} + u_n\cdot\bigg(1 + \frac{(\log\log n)\sqrt{n} + 1}{n}\bigg)\sqrt{1 - \frac{j - i}{n}}\bigg\}\\
&\subseteq \Omega_n^\comp\bigcup\bigg\{Z_{i,j}^+ \le \sqrt{\frac{\log\log n}{\log n}} + u_n\cdot\bigg(1 + \frac{(\log\log n)\sqrt{n} + 1}{n}\bigg)\bigg\}\\
&\subseteq \Omega_n^\comp\bigcup\{Z_{i,j}^+ \le u_n(\tau + \eps)\},
\end{align*}
for any fixed $\eps > 0$ provided that $n$ is large enough. To deal with the standardized sums $Z_{i,j}^+$, we need Theorem 1.1 and Theorem 1.2 in \citep{kabluchko2014limiting}. Because $X_1 \leq 1$, it belongs to the superlogarithm family defined in \citep{kabluchko2014limiting}. Applying Theorem 1.1 and Theorem 1.2 in \citep{kabluchko2014limiting}, we obtain
\beq\label{eq:lefttail}
\lim_{n \to \infty}\P\{ Z_n^+ \le u_n \} = \exp\bigg\{-\frac{8}{9\sqrt{\pi}}e^{-\tau}\bigg\},
\eeq
and 
\beq\label{eq:lefttail_bounds}
\lim_{A \to \infty}\liminf_{n \to \infty}\P \{ Z_n^+  = Z_n^+(A^{-1}(\log n)^3, A(\log n)^3) \} = 1.
\eeq
By \eqref{eq:omegan}, \eqref{eq:lefttail} and the fact that $(\log n)^3 \ll \frac{n}{\log n(\log\log n)}$, 
\begin{align*}
&\limsup_{n \to \infty}\P(M_n^+ \le u_n ) \\
&= \limsup_{n \to \infty}\P\bigg\{ \bigcap_{0 \le i < j \le n}A_{i, j}^{n+}(u_n)\bigg\} \\
&\le \limsup_{n \to \infty}\P\bigg\{ \bigcap_{\substack{0 \le i < j \le n :  j - i \le \frac{n}{\log n \log\log n}}}A^{n+}_{i, j}(u_n(\tau + \eps))\bigg\} + \limsup_{n \to \infty} \P(\Omega_n^\comp)\\
&\le \limsup_{n \to \infty}\P\bigg\{Z_n^+\bigg(1, \frac{n}{\log n \log\log n}\bigg) \le u_n(\tau + \eps)\bigg\} + \limsup_{n \to \infty} \P(\Omega_n^\comp)\\
&=\exp\bigg\{-\frac{8}{9\sqrt{\pi}}e^{-\tau - \eps}\bigg\}.
\end{align*}
As $\eps > 0$ is arbitrary we get
\beq
\limsup_{n \to \infty}\P(M_n^+ \le u_n ) \le \lim_{\eps \to 0} \exp\bigg\{-\frac{8}{9\sqrt{\pi}}e^{-\tau - \eps}\bigg\} = \exp\bigg\{-\frac{8}{9\sqrt{\pi}}e^{-\tau}\bigg\}.
\eeq
\paragraph{Step 2: Lower bound} 
Define
\beq
k_n = \frac{n}{\log n(\log\log n)}, \quad
K_n = \frac{n\log\log n}{\log n}.
\eeq 
We establish the lower bound by dividing the range of $j - i$ into five regions: 
\begin{align*}
R_1 &= [1, u_n^2), &  
R_2 &= [u_n^2, k_n), \\
R_3 &= [k_n, K_n), &
R_4 &= [K_n, n - K_n), \\ 
R_5 &= [n - K_n, n). &
\end{align*} 

\medskip
\noindent
$\bullet$ For $R_1$, note that 
\beq
\frac{j - i}{n} - \frac{u_n}{\sqrt{n}}w_{i, j}^n \le 0,
\eeq
is equivalent to
\beq
j - i \le \frac{u_n^2}{1 + u_n^2/n}.
\eeq
Since $u_n^4 \ll n$, $i, j$ only take value in integers, it is further equivalent to $j - i \le u_n^2$ when $n$ is large enough, which is exactly $R_1$. 
Therefore, when $n$ is large enough, 
\beq
A^{n+}_{i, j}(u_n) = \Omega,
\eeq
for any $(i,j)$ satisfying $j-i \in R_1$ so that
\beq
\bigcap_{0 \le i < j \le n: j - i \in R_1}A^{n+}_{i, j}(u_n) = \Omega.
\eeq

\noindent
$\bullet$  For $R_2$, following the same argument that was used to prove the upper bound, it can be shown that 
\beq
\liminf_{n \to \infty}\P\bigg\{ \bigcap_{\substack{0 \le i < j \le n :  j - i \in  R_2}}A^{n+}_{i, j}(u_n)\bigg\} \ge \exp\bigg\{-\frac{8}{9\sqrt{\pi}}e^{-\tau}\bigg\}.
\eeq

\noindent
$\bullet$ Turning to $R_3$, we shall show that 
\beq
\P\bigg(\max_{0 \le i \le n - k_n} \frac{S_{i + k_n}^+ - S_i^+}{\sqrt{k_n}} \le \log\log n \bigg) \to 1,
\eeq
and then use this fact to prove that the maximum of $M_{i, j}^+$ over $R_3$ is ignorable. First we bound $\max_{0 \le i \le n - k_n} (S_{i + k_n}^+ - S_i^+)$. Define 
\beq
q_n  = \frac{k_n}{(\log\log n)^2} \ll k_n,
\eeq
and introduce a positive sequence $\eps_n$ such that $q_n \ll \eps_n \ll k_n$. Consider the following two-dimensional grid with mesh size $q_n$:
\beq
\cJ_n = \{ (x, y) \in q_n\bbZ^2 : x \in [-\epsilon_n, n + \epsilon_n], y - x \in [0.9k_n - \epsilon_n, 1.1k_n + \epsilon_n] \}.
\eeq
By the union bound,
\beq
\P \big\{ Z_n^+(0.9k_n, 1.1k_n) > \log\log n \big\} \le \sum_{(x, y) \in \cJ_n}\P\bigg\{  \max_{(i, j) \in \bbT_{q_n}(x, y)} Z_{i, j}^+ \ge \log\log n  \bigg\}.
\eeq
Note that the cardinality of $\cJ_n$ satisfies
\beq
|\cJ_n| \sim \frac{(1.1 - 0.9)nk_n}{(q_n)^2} = 0.2(\log\log n)^5 \log n .
\eeq
By the translation invariance property of $\bbT_{q_n}(x, y)$ and Lemma \ref{lem:weakcontrolzijplus}, taking $l = y - x$, $r = q_n$ and $u = \log\log n$ for large enough $n$ (and thus satisfying the conditions in Lemma \ref{lem:weakcontrolzijplus}) temporarily, we have
\begin{align*}
\P\big\{ Z_n^+(0.9k_n, 1.1k_n) \ge \log\log n \big\} &\le C|\cJ_n|\exp\bigg\{-\frac{(\log\log n)^2}{2} \bigg\} \to 0,
\end{align*}
where $C > 0$ is a constant. Since 
\beq
\max_{0 \le i \le n - k_n} \frac{S_{i + k_n}^+ - S_i^+}{\sqrt{k_n}}\le Z_n^+(0.9k_n, 1.1k_n),
\eeq
it follows that
\beq\label{eq:fixscan}
\limsup_{n \to \infty}\P\bigg( \max_{0 \le i \le n - k_n} \frac{S_{i + k_n}^+ - S_i^+}{\sqrt{k_n}} \ge \log\log n \bigg) = 0.
\eeq
We may now prove the ignorability of maximum of $M_{i, j}^+$ when taking values on $R_3$. Define
\beq
\Omega_{1n} := \bigg\{\max_{0 \le i \le n - k_n} \frac{S_{i + k_n}^+ - S_i^+}{\sqrt{k_n}} \le \log\log n\bigg\}.
\eeq
By \eqref{eq:fixscan}, $\P(\Omega_{1n}) \to 1$ as $n \to \infty$. For $j - i \in R_3$, 
\begin{align*}
&A_{i, j}^{n+}(u_n) \\
&\supseteq \Omega_n\bigcap\Omega_{1n}\bigcap \bigg\{S_j^+ - S_i^+ \le (j - i)\frac{S_{n + 1}^+ - 1}{n} + \frac{u_n}{\sqrt{n}}(n + 1 - S_{n + 1})w_{i,j}^n\bigg\}\\
&=\Omega_n\bigcap\Omega_{1n}\bigcap \bigg\{S_j^+ - S_{i + k_n}^+ \le (j - i)\frac{S_{n + 1}^+ - 1}{n} - S_{i + k_n}^+ + S_i^+ + \frac{u_n}{\sqrt{n}}(n + 1 - S_{n + 1}^+)w_{i,j}^n\bigg\}\\
&\supseteq\Omega_n\bigcap\Omega_{1n}\bigcap \bigg\{S_j^+ - S_{i + k_n}^+ \le -(j - i)\frac{\log \log n}{\sqrt{n}} - \sqrt{k_n}\log\log n + \frac{u_n}{\sqrt{n}}(n + 1 - \log\log n\sqrt{n})w_{i,j}^n\bigg\}\\
&\supseteq\Omega_n\bigcap\Omega_{1n}\bigcap \bigg\{\frac{S_j^+ - S_{i + k_n}^+}{\sqrt{j - i - k_n}} \le\sqrt{\frac{j - i}{j - i - k_n}}\bigg[u_n\cdot\bigg(  1 - \frac{\log \log n}{\sqrt{n}}  \bigg) - \sqrt{\frac{(\log\log n)^3}{\log n}}-\log\log n\bigg]\bigg\}\\
&\supseteq\Omega_n\bigcap\Omega_{1n}\bigcap \bigg\{\frac{S_j^+ - S_{i + k_n}^+}{\sqrt{j - i - k_n}} \le\sqrt{1 + \frac{k_n}{K_n}}\bigg[u_n\cdot\bigg(1 -\frac{\log \log n}{\sqrt{n}}  \bigg) - \sqrt{\frac{(\log\log n)^3}{\log n}} -\log\log n\bigg]\bigg\}\\
&\supseteq\Omega_n\bigcap\Omega_{1n}\bigcap \bigg\{\frac{S_j^+ - S_{i + k_n}^+}{\sqrt{j - i - k_n}} \le u_n(\log\log n)\bigg\},
\end{align*}
where the last line follows by noting that $k_n/K_n = 1/(\log\log n)^2$. Thus 
\begin{align*}
&\bigcap_{0 \le i < j \le n: ~k_n + 1 \le j - i \le K_n}A_{i, j}^{n+}(u_n) \\
&\supset \Omega_n\bigcap\Omega_{1n}\bigcap \bigg\{\max_{0 \le i < j \le n: ~k_n + 1 \le j - i \le K_n}\frac{S_j^+ - S_{i + k_n}^+}{\sqrt{j - i -k_n}} \le u_n(\log\log n)\bigg\}\\
&\supset \Omega_n\bigcap\Omega_{1n}\bigcap \bigg\{\max_{0 \le i < j \le n: ~j - i \le K_n}\frac{S_j^+ - S_i^+}{\sqrt{j - i}} \le u_n(\log\log n)\bigg\},
\end{align*}
and recall that $u_n(\cdot)$ is a function. Since $(\log n)^3 \ll K_n$, \eqref{eq:lefttail} and \eqref{eq:lefttail_bounds} together imply that 
\begin{align*}
&\liminf_{n \to \infty}\P\big\{ M_n^+(k_n + 1, K_n) \le u_n(\tau) \big\} \\
&\ge \liminf_{n \to \infty}\P\bigg[\Omega_n\bigcap\Omega_{1n}\bigcap  \{Z_n^+(1, K_n) \le u_n(\log\log n) \}\bigg] \\
&\ge \liminf_{n \to \infty}\P\bigg[\Omega_n\bigcap\Omega_{1n}\bigcap  \{Z_n^+(1, K_n) \le u_n(\tau') \}\bigg]= \exp\bigg(-\frac{8}{9\sqrt{\pi}}e^{-\tau'}\bigg),
\end{align*}
for any $\tau, \tau'$. We now take $\tau' \to \infty$, yielding
\beq
\liminf_{n \to \infty}\P\big\{ M_n^+(k_n + 1, K_n) \le u_n(\tau) \big\} = \liminf_{\tau' \to \infty}\exp\bigg( -\frac{8}{9\sqrt{\pi}}e^{-\tau'} \bigg)= 1.
\eeq
\noindent
$\bullet$ Next we apply the Kolmogorov's Theorem to deal with $R_4$.
Define the centered order statistics
\beq
\bui = \ui - \frac{i}{n + 1}. 
\eeq
Note that when $n$ is large enough,
\begin{align*}
A_{i, j}^{n+}(u_n) &= \bigg\{\buj - \bui \ge \frac{j - i}{n(n + 1)} - \frac{u_n}{\sqrt{n}}w_{i, j}^n \bigg\}\\
&= \bigg\{\sqrt{n}(\buj - \bui) \ge \frac{j - i}{\sqrt{n}(n + 1)} - u_nw_{i, j}^n \bigg\}\\
&\supseteq \bigg\{\sqrt{n}(\buj - \bui) \ge - 0.9u_nw_{i, j}^n \bigg\}\\
&\supseteq \bigg\{0.9u_nw_{i, j}^n \ge \sqrt{n}(\buj - \bui) \ge - 0.9u_nw_{i, j}^n \bigg\}\\
&\supseteq \bigg\{2\sqrt{n} \max\{|\bui|, |\buj|\} \le 0.9u_nw_{i, j}^n \bigg\}.
\end{align*}
For $(i,j)$ such that $j-i \in R_4$, $w_{i, j}^n$ is minimized at either $j - i = \frac{n\log\log n}{\log n}$ or $n - \frac{n\log\log n}{\log n}$.
Consequently,
\begin{align*}
\bigcap_{0 \le i < j \le n: j - i \in R_4}A^n_{i, j}(u_n) &\supseteq \bigg\{\sqrt{n} \max_{1 \le i \le n}\{|\bui|\} \le \frac{0.9u_n}{2}\min_{0 \le i < j \le n: j - i \in R_4}w_{i, j}^n\bigg\}\\
&=\bigg\{\sqrt{n} \max_{1 \le i \le n}\{|\bui|\} \le \frac{0.9u_n}{2}\sqrt{\frac{\log\log n}{\log n}\bigg(1 - \frac{\log\log n}{\log n} \bigg)}\bigg\}.
\end{align*}
The Kolmogorov's Theorem states that for any $y\ge0$,
\beq
\lim_{n\to\infty} \P\bigg(\sqrt{n}\max_{1 \le i \le n}|\bui| \le y\bigg) = K(y) := 1 - 2e^{-2y^2} +2e^{-8y^2} - \cdots~.
\eeq
In particular, $(\sqrt{n}\max_{1 \le i \le n}|\bui|)$ is tight. Therefore, by the fact that
\beq
\frac{0.9u_n}{2}\sqrt{\frac{\log\log n}{\log n}\bigg(1 - \frac{\log\log n}{\log n}\bigg)} \asymp \sqrt{\log\log n} \to \infty,
\eeq 
we obtain
\beq\label{eq:kolplus}
\lim_{n \to \infty}\P\bigg\{ \bigcap_{0 \le i < j \le n: j - i \in R_4}A_{i, j}^{n+}(u_n)\bigg\} = 1.
\eeq

\noindent
$\bullet$ For $R_5$, define $j' = n - j$ and $U_{(j' + 1)}' = 1 - U_{(n + 1- j' - 1)} = 1 - U_{(j)}$. A simple change of indices gives 
\begin{align*}
M_n^+(n - K_n, n) 
&= \max_{\substack{0 \le i < j \le n\\n - K_n \le j - i < n}}\frac{j - i - n(\uj - \ui)}{\sqrt{(j - i)(1 - \frac{j - i}{n})}}\\
&\le \max_{\substack{i, j' \ge 0\\  i + j' < K_n}} \frac{nU_{(j' + 1)}' - (j' + 1) + n\ui - i}{\sqrt{(i + j') (1 - \frac{i + j'}{n} )}} \\
&\le 1.01\max_{\substack{i, j\ge 0\\ 1 \le i + j < K_n}} \frac{n\uj' - j + n\ui - i}{\sqrt{i + j}} + 1.01 \\
\end{align*}
where the last inequality holds when $n$ is large enough since $K_n \ll n$. 
Now, by the above statements, to prove 
\beq
\limsup_{n \to \infty}\P(M_n^+(n - K_n, n) \ge u_n) = 0,
\eeq
it suffices to prove
\beq
\limsup_{n \to \infty}\P\bigg(\max_{\substack{i, j \ge 0\\ 1 \le i + j < K_n}} \frac{n\ui - i + n\uj' - j}{\sqrt{i + j}} \ge \sqrt{1.9\log n}\bigg) = 0.
\eeq
Assuming $0/0 = 0$, observe that
\begin{align*}
&\P\bigg(\max_{\substack{i, j \ge 0\\ 1 \le i + j < K_n}} \frac{n\ui - i + n\uj' - j}{\sqrt{i + j}} \ge \sqrt{1.9\log n}\bigg)\\
&= \P\bigg\{\max_{\substack{i, j \ge 0\\1 \le i + j \le K_n}} \bigg(\frac{n\ui - i}{\sqrt{i + j}} + \frac{n\uj' - j}{\sqrt{i + j}} \bigg) \ge \sqrt{1.9\log n}\bigg\}\\
&\le \P\bigg\{\max_{\substack{i, j \ge 0\\1 \le i + j \le K_n}} \bigg(\frac{n\ui - i}{\sqrt{i}}+ \frac{n\uj' - j}{\sqrt{j}}\bigg) \ge \sqrt{1.9\log n}\bigg\}\\
&\le \P\bigg(\max_{0 \le i \le n} \frac{n\ui - i}{\sqrt{i}} + \max_{0 \le j \le n}\frac{n\uj' - j}{\sqrt{j}}  \ge \sqrt{1.9\log n}\bigg)\\
&\le 2\P\bigg(\max_{0 \le i \le n} \frac{n\ui - i}{\sqrt{i}}  \ge \frac{\sqrt{1.9\log n}}{2}\bigg)\\
&\le 2\P\bigg(\max_{0 \le i \le n} \frac{n\ui - i}{\sqrt{i(1 - i/n})}  \ge \frac{\sqrt{1.9\log n}}{2}\bigg).
\end{align*}
However, \citet{eicker1979asymptotic} showed that
\beq
\max_{0 \le i \le n} \frac{n\ui - i}{\sqrt{i(1 - i/n)}} \sim \sqrt{2\log\log n},
\eeq
which finishes the proof for $R_5$.

\noindent
$\bullet$ Now combining all the results gives the lower bound, which, together with the upper bound, establishes the proof of \thmref{mnplus}.
\qed

\subsubsection{Proof of (\ref{result:mnminus})}
In what follows, we let 
\beq
u_n = u_n(\tau) := \log n + \tau, 
\eeq
with $\tau$ fixed. 
Define
\begin{align*}
A_{i, j}^{n-}(u_n) & = \bigg\{\frac{n(\uj - \ui) - (j - i)}{\sqrt{(j - i)(1 - \frac{j - i}{n})}} \le u_n\bigg\} =\bigg\{\uj - \ui \le \frac{j - i}{n} + \frac{u_n}{\sqrt{n}}w_{i, j}^n \bigg\},
\end{align*}
where $w_{i, j}^n$ is defined in \eqref{w}, and note that  
\beq
\big\{ M_n^- \le u_n \big\} = \bigcap_{0 \le i < j \le n} A_{i,j}^{n-}(u_n).
\eeq 

\paragraph{Step 1: Upper bound} 
For the upper bound, again, we only consider a particular order of magnitude for the length, the one that contributes the most to the maximum. When $j - i \le \frac{n \log\log n}{(\log n)^2}$, 
\begin{align*}
A_{i, j}^{n-}(u_n) &\subset \Omega_n^\comp \bigcup \{ \Omega_n \bigcap A^{n-}_{i, j}(u_n) \} \\
&\subset \Omega_n^\comp \bigcup \bigg\{\frac{S_j^- - S_i^-}{\sqrt{j - i}} \le (\log\log n)\sqrt{\frac{j - i}{n}} + u_n\cdot\bigg(1 + \frac{\log\log n}{\sqrt{n}}\bigg)\bigg\}\\
&\subset \Omega_n^\comp \bigcup \bigg\{\frac{S_j^- - S_i^-}{\sqrt{j - i}} \le  u_n(\tau + \eps)  \bigg\},
\end{align*}
for any $\eps > 0$, where $\Omega_n$ is given in \eqref{eq:omegancn}. By \eqref{eq:omegan}, it suffices to consider the second event on the RHS.  Applying Theorem 1.7 in \citep{kabluchko2014limiting}, the limiting distribution of $Z_n^-$ is the same as that of $\max_{1 \le i \le n} (-X_i)$. By the independence of $\{X_i\}$, we obtain
\begin{align}\label{lem:righttail}
\lim_{n \to \infty}\P( Z_n^- \le u_n ) &= \lim_{n \to \infty}\P\{ \max_{1 \le i \le n} (-X_i) \le u_n \}  = \exp\{ -\exp(1 - \tau) \}. 
\end{align}
Therefore, taking $\eps \to 0$,
\begin{align*}
\limsup_{n \to \infty}\P(M_n^- \le u_n)
&= \limsup_{n \to \infty}\P\bigg\{\bigcap_{0 \le i < j \le n}A_{i, j}^{n-}(u_n)\bigg\}\\
&\le \limsup_{n \to \infty}\P\bigg\{ \bigcap_{\substack{0 \le i < j \le n: j - i \le \frac{n\log\log n}{(\log n)^2}}}A_{i, j}^{n-}(u_n)\bigg\} + \P(\Omega_n^\comp)\\
&\le \limsup_{\eps \to 0}\exp \{ -\exp(1 - \tau - \eps) \}\\
&=\exp \{ -\exp(1 - \tau) \}.
\end{align*}



\paragraph{Step 2: Lower bound}
As in the proof of \eqref{thm:mnplus}, we divide the range of $j - i$ into several subintervals. Similar to the upper bound case, 
\beq
\lim_{n\to \infty }\P \bigg\{ M_n^-\bigg(1, \frac{n\log\log n}{(\log n)^2}\bigg)   \leq u_n \bigg\} = \exp \{ -\exp(1 - \tau) \}.
\eeq 
With the same argument that was used to prove \eqref{eq:kolplus}, we obtain
\beq
\lim_{n \to \infty}\P\bigg\{ \bigcap_{\substack{0 \le i < j \le n :  \frac{n\log\log n}{(\log n)^2} \le j - i \le n - \frac{n\log\log n}{(\log n)^2}}}A_{i, j}^{n-}(u_n)\bigg\}  = 1.
\eeq
The case where $j - i \ge n - \frac{n\log\log n}{(\log n)^2}$ can be treated similarly to proving the region $R_5$ in the proof of Theorem \ref{thm:mnplus}, even easier since now $u_n \sim \log n$ (and details are omitted).

\subsubsection{Proof of (\ref{result:mn})}
This follows directly from \eqref{result:mnplus}, where we learn that $M_n^+ \asymp_P \sqrt{\log n}$, and \eqref{result:mnminus}, which states that $M_n^- \asymp_P \log n$ (here $A_n \asymp_P B_n$ means $A_n = O(B_n)$ and $B_n = O(A_n)$). Combining them implies that $M_n^- \gg_P M_n^+$, and therefore $M_n = \max(M_n^-, M_n^+) = M_n^-$ with probability tending to 1 as $n$ increases.

\subsection{Proof of \thmref{tmnplus}}
\subsubsection{Proof of (\ref{eq:tmnplusall})}
We first derive the asymptotic distribution of 
\beq
\tilde M_n^+(1,2) = \max_{0 \le i \le n - 1}\frac{1 - n(U_{(i + 1)} - \ui)}{\sqrt{n(U_{(i + 1)} - \ui)(1 - U_{(i + 1)} + \ui)}},
\eeq
which is exactly the same as that of \eqref{eq:tmnplusall} and then show that $\tilde M_n^+(2, n) \ll_P \sqrt{n}$. These together imply \eqref{eq:tmnplusall}. To get the asymptotic distribution of $\tilde M_n^+(1,2)$, note that
\begin{align}
\tilde M_n^+(1,2)  \le \max_{0 \le i \le n - 1}\frac{1}{\sqrt{n(U_{(i + 1)} - \ui)[1 - (U_{(i + 1)} - \ui)]}}  \label{eq:tmnoneoneupper}\\
\mbox{ and }~ \tilde M_n^+(1,2) \ge \max_{0 \le i \le n - 1}\frac{1 - n(U_{(i + 1)} - \ui)}{\sqrt{n(U_{(i + 1)} - \ui)}} \label{eq:tmnoneonelower},
\end{align}
where both upper and lower bounds are functions of 
\beq
T := \min_{0 \le i \le n - 1} (U_{(i + 1)} - \ui). 
\eeq
Therefore it suffices to work on $T$ instead. It is easy to see that $T \le 1/n$. By symmetry, 
\beq
\P(T \ge t) = n!\P(T \ge t, U_1 \le U_2 \le \cdots \le U_n). 
\eeq
Define the subset
\beq
A_t  = \{(u_1, \ldots, u_n) \in [0, 1]^n: u_i + t \le u_{i + 1},  i = 0, 1, \ldots, n-1 \},
\eeq
where $u_0 = 0$. Then, 
\beq
\{(U_1,\cdots, U_n) \in A_t\} = \{T \ge t, U_1 \le U_2 \le \cdots \le U_n\},
\eeq
and hence
\beq
\P(T \ge t, U_1 \le U_2 \le \cdots \le U_n) = \lambda_n(A_t),
\eeq
where $\lambda_n$ is the Lebesgue measure on $\bbR^n$. Define a mapping
\beq
h: \quad  A_t \longrightarrow Q  \subset [0, 1 - nt]^n, \quad h(u_1,u_2,\cdots, u_n) = (u_1 - t, u_2 - 2t, u_n - nt),
\eeq
where 
\beq
Q := \{(y_1,\ldots,y_n): y_i \le y_{i + 1}, \forall ~ 1 \le i \le n - 1\} \cap [0, 1 - nt]^n. 
\eeq
It is easy to verify that $h$ is a volume-preserving bijection. Hence 
\beq
\P(T \ge t, U_1 \le U_2 \le \cdots \le U_n) = \lambda_n(A_t) = \lambda_n(Q) = \frac{(1 - nt)^n}{n!}
\eeq
Therefore, we have 
\beq
\P(T \ge t) = \frac{n!(1 - nt)^n}{n!} = (1 - nt)^n,
\eeq
for $0 \le t \le 1/n$. For any $0 \le t \le 1/n$,
\beq
\P\bigg\{  \min_{0 \le i \le n - 1} (U_{(i + 1)} - \ui) \ge t  \bigg\} = (1 - n t)^n ,
\eeq 
which implies
\beq
\lim_{n \to \infty}\P\bigg\{  \min_{0 \le i \le n - 1} (U_{(i + 1)} - \ui) \ge \frac{\tau}{n^2}  \bigg\} = \exp(-\tau).
\eeq
This, together with \eqref{eq:tmnoneoneupper} and \eqref{eq:tmnoneonelower}, implies
\beq
\lim_{n \to \infty}\P \bigg( \tilde M_n^+(1, 1) \le \sqrt{\frac{n}{\tau}} \bigg) = \exp(-\tau).
\eeq
It remains to show that $\tilde M_n^+(2 , n) \ll_P \sqrt{n}$. We will divide it into $\tilde M_n^+(2, (\log n)^2)$, $\tilde M_n^+((\log n)^2, n - (\log n)^2)$ and $\tilde M_n^+(n - (\log n)^2, n)$. When $2 \le j - i \le (\log n)^2$, note that
\begin{align}
1 - (\uj - \ui) &= 1 - \frac{j - i}{n + 1} - (\buj - \bui)\\
&\ge 1 - \frac{(\log n)^2}{n + 1} - 2\max_{1 \le i \le n} |\bui|\nonumber\\
&= 1 + O_P(1/\sqrt{n})\nonumber\\
&\ge 0.5,\label{eq:oneujui}
\end{align}
where the last inequality holds on a sequence of events with probability tending to one, by Kolmogorov's Theorem mentioned in the proof of \thmref{mnplus} when $n$ is large enough. Meanwhile,
\begin{align}
\frac{j - i - n(\uj - \ui)}{\sqrt{n(\uj - \ui)}} &= \frac{j - i - \frac{n}{n + 1 - S_{n + 1}^+}(j - i - S_j^+ + S_i^+)}{\sqrt{n\frac{n}{n + 1 - S_{n + 1}^+}(j - i - S_j^+ + S_i^+)}}\nonumber\\
&= (1 + O_P(1/\sqrt{n}))\tilde Z_{i, j} +O_P(1/\sqrt{n})\nonumber\\
&\le 1.01\tilde Z_{i, j} + 0.01,\label{eq:mnplusbound}
\end{align}
on the sequence of events $\Omega_n$ defined in \eqref{eq:omegan}. With these results, the union bound, \eqref{eq:rescumulantineq} and the fact that $I^+(s) = -s - \log(1 - s)$ on $[0, 1)$, for any $\eps > 0$, 
\begin{align*}
&\P(\tilde M_n^+(2, (\log n)^2) \ge \eps \sqrt{n}) \\
&\le \P(\tilde Z_n^+(2, (\log n)^2) \ge 0.9\eps \sqrt{n}) + \P(\Omega_n^c)\\
&\le \sum_{0 \le i < j \le n: 2 \le j - i \le (\log n)^2}\P(\tilde Z_{i, j}^+ \ge 0.9\eps \sqrt{n}) + \P(\Omega_n^c)\\
&\le n\sum_{2 \le k \le (\log n)^2}\P\bigg(\frac{S_k^+}{\sqrt{k - S_k^+}} \ge 0.9\eps \sqrt{n}) + \P(\Omega_n^c)\\
&\le n\sum_{2 \le k \le (\log n)^2}\exp\bigg[ -kI^+\bigg\{g^+\bigg(\frac{0.9\eps \sqrt{n}}{\sqrt{k}}\bigg\}\bigg]  \bigg\}+ \P(\Omega_n^c)\\
&\le n\sum_{2 \le k \le (\log n)^2}\exp\bigg[ kg^+\bigg(\frac{0.9\eps \sqrt{n}}{\sqrt{k}}\bigg) + k\log\bigg\{1 - g^+\bigg(\frac{0.9\eps \sqrt{n}}{\sqrt{k}}\bigg)\bigg\}  \bigg]+ \P(\Omega_n^c). \label{eq:tmplusgplus}
\end{align*}
As $a \to \infty$, $0.9\eps\sqrt{n}/\sqrt{k} \to \infty$ and $g^+(a) \uparrow 1$. In addition,
\beq
1 - g^+(a) = 1 - \frac{a(\sqrt{a^2 + 4} - a)}{2} = 1 - \frac{2a}{\sqrt{a^2 + 4} + a} = \frac{\sqrt{a^2 + 4} - a}{\sqrt{a^2 + 4} + a} = \frac{4}{(\sqrt{a^2 + 4} + a)^2}.
\eeq
Note that
\beq
\frac{0.9}{a^2} \le \frac{4}{(\sqrt{a^2 + 4} + a)^2} \le \frac{1}{a^2},
\eeq
when $a$ is large enough. Therefore, when $n$ is sufficiently large,
\begin{align*}
\P(\tilde M_n^+(2, (\log n)^2) \ge \eps \sqrt{n}) &\le n\sum_{2 \le k \le (\log n)^2}\exp\bigg\{ k - k\log\bigg(\frac{0.9\eps n}{k}\bigg)  \bigg\} \\
&\le n\sum_{2 \le k \le (\log n)^2}\exp(- 0.9k\log n  ) \\
&\le n\sum_{2 \le k \le (\log n)^2}\exp( -1.8\log n ) \to 0,
\end{align*}
where the last inequality uses that $k \ge 2$.

When $(\log n)^2 \le j - i \le n - (\log n)^2$, by Theorem \ref{thm:mnplus} and Theorem \ref{thm:mnminus}, we have
\beq\label{eq:orderdiffup}
\uj - \ui \le \frac{j - i}{n} +\frac{1.01\log n}{ \sqrt{n}}w_{i, j}^n,
\eeq
\beq\label{eq:orderdiffup2}
1 - (\uj - \ui) \ge 1 - \frac{j - i}{n} - \frac{1.01\log n}{ \sqrt{n}}w_{i, j}^n,
\eeq
\beq\label{eq:orderdifflow}
\uj - \ui \ge \frac{j - i}{n} - \frac{1.01\log n}{\sqrt{n}}w_{i, j}^n,
\eeq
and
\beq\label{eq:orderdifflow2}
1 - (\uj - \ui) \le 1 - \frac{j - i}{n} + \frac{1.01\log n}{\sqrt{n}}w_{i, j}^n,
\eeq
with probability tending to one. Together, \eqref{eq:orderdiffup} and \eqref{eq:orderdifflow} lead to
\beq
\bigg|\frac{n(\uj - \ui)}{j - i}\bigg| = O_P(1),
\eeq
uniformly in $(i, j)$ satisfying $j-i \ge (\log n)^2$. \eqref{eq:orderdiffup2} and \eqref{eq:orderdifflow2} imply
\beq
\bigg|\frac{1 - (\uj - \ui)}{1 -(j - i)/n}\bigg| = O_P(1).
\eeq
These, combined with the definitions of $M_n^+$ and $\tilde M_n^+$, imply
\beq
\tilde M_n^+\{(\log n)^2, n - (\log n)^2\} \asymp_P M_n^+\{(\log n)^2, n - (\log n)^2\}.
\eeq
By Theorem \ref{thm:mnplus}, it follows that for any $\eps > 0$,
\beq
\lim_{n \to \infty}\P [ \tilde M_n^+\{(\log n)^2, n - (\log n)^2\} \ge \eps\sqrt{n} ]  = 0.
\eeq
Finally, when $n - (\log n)^2 \le j - i \le n$,  define $j' = n - j$ and thus $U_{(j' + 1)}' = 1 - U_{(n + 1- j' - 1)} = 1 - U_{(j)}$. A simple change of indices gives 
\begin{align*}
&\tilde M_n^+(n - (\log n)^2, n)\\
&= \max_{\substack{0 \le i < j \le n\\n - (\log n)^2 \le j - i \le n}}\frac{j - i - n(\uj - \ui)}{\sqrt{n(\uj - \ui)(1 - (\uj - \ui))}}\\
&=\max_{\substack{i, j' \ge 0\\ i + j' \le (\log n)^2}} \frac{nU_{(j' + 1)}' - (j' + 1) + n\ui - i}{\sqrt{n(\ui + U_{(j' + 1)}') (1 - \ui - U_{(j' + 1)}')}}\\
&=\max_{\substack{i, j \ge 0\\ 1\le i + j \le (\log n)^2}} \frac{n\ui - i + n\uj' - j}{\sqrt{n(\ui + \uj') (1 - \ui - \uj')}} + O_P(1).
\end{align*}
Notice that when $i, j \ge 0$ and $1 \le i + j \le (\log n)^2$,
\beq
1 - \ui - \uj' > 1 -2 \max_{0 \le i \le (\log n)^2}\ui > 0.5,
\eeq
with probability tending to one, which can be seen by a simple application of Kolmogorov's Theorem. By a similar speech when proving $R_5$ in the proof of Theorem \ref{thm:mnplus}, 
\begin{align}
&\P\bigg(\max_{\substack{i, j \ge 0\\ 1 \le i + j \le (\log n)^2}} \frac{n\ui - i + n\uj' - j}{\sqrt{n(\ui +\uj') (1 - \ui - \uj')}}\ge \eps \sqrt{n} \bigg) \\
&\le \P\bigg(\max_{\substack{i, j \ge 0\\1 \le i + j \le (\log n)^2}} \frac{n\ui - i + n\uj' - j}{\sqrt{n(\ui +\uj')}}\ge 0.5 \eps \sqrt{n} \bigg) \\
&\le 2\P\bigg(\max_{0 \le i \le (\log n)^2} \frac{n\ui - i}{\sqrt{n\ui}}\ge 0.25\eps  \sqrt{n} \bigg) \\
&\le 2\P\bigg(\max_{0 \le i \le (\log n)^2}  \frac{n\ui - i}{\sqrt{n\ui(1 - \ui)}}\ge 0.25\eps \sqrt{n}  \bigg) \\
&\to 0, \label{eq:tmnpluslast}
\end{align}
where the last line again follows from \citet{eicker1979asymptotic}. These eventually establish the proof of \eqref{eq:tmnplusall}.

\subsubsection{Proof of (\ref{eq:tmnpluscubelocal})}
{\bf The roadmap of our proof. }

To derive the asymptotic distribution, we first focus on the most contributed part, i.e., those with length $j - i = l_n \sim a\log^3 n$ for $a > 0$. Define 
\beq
u_n = u_n(\tau) := \sqrt{2\log n}\bigg(1 + \frac{-3\log\log n + 2\tau}{4\log n}\bigg).
\eeq
For any two constants $0 < A_1 < A_2 < \infty$, define $l_n^-  = A_1 \log^3 n$ and $l_n^+ = A_2 \log^3 n$. We prove
\beq\label{eq:tmnplusrestricted}
\lim_{n \to \infty}\P\{  \tilde M_n^+(l_n^-, l_n^+) \le u_n\} = \exp\bigg\{ -e^{-\tau}\int_{A_1}^{A_2} \Lambda_1(a)da\bigg\}.
\eeq
It turns out that to prove \eqref{eq:tmnplusrestricted}, within that region, it suffices to focus on
\beq
\tilde Z_{i, j}^+ := \frac{S_j^+ - S_i^+}{\sqrt{j - i - (S_j^+ - S_i^+)}},
\eeq
instead, up to restricting on subset $\Omega_n$ defined in \eqref{eq:omegan}. Write
\beq
\tilde Z_n^+(k, l) = \max_{0 \le i < j \le n: k \le j - i \le l } \tilde Z_{i, j}^+,
\eeq
and
\beq
\tilde Z_n^+ = \tilde Z_n^+(1, n). 
\eeq
We will use Lemma \ref{lem:interchange} to show that
\beq\label{eq:local2}
\cQ_n := \P\bigg(\max_{(i, j) \in \bbT_{Bq_n}(x, x + l_n)} \tilde Z_{i, j}^+ \ge u_n\bigg) \sim P_n(0)\bigg\{  1 + H^2\bigg(\frac{B}{a}\bigg)  \bigg\},
\eeq
where $B\geq 1$ is an integer and the quantities $P_n(0)$, $H(x)$, $q_n$ will be specified later. Next, with a domain $\bbJ_n(z)$ (to be specified) larger than $\bbT_{Bq_n}$, we will show that
\beq\label{eq:largelocalrate}
\P\bigg(\max_{(i, j) \in \bbJ_n(z)} \tilde Z_{i, j}^+ \ge u_n \bigg) \sim e^{-\tau}\frac{w_n}{n}\int_{A_1}^{A_2} \Lambda_1(a)da,
\eeq
which no longer depends on $B$, with $\Lambda_1(a)$ defined in the theorem part. This enables us to apply Poisson limit theorem in \cite{arratia1989two} to get
\beq\label{eq:tznplus}
\lim_{n \to \infty}\P\{  \tilde Z_n^+(l_n^-, l_n^+) \le u_n \} = \exp\bigg\{ -e^{-\tau}\int_{A_1}^{A_2} \Lambda_1(a)da\bigg\}.
\eeq
The final step will be showing that the region beyond $A_2(\log n)^3$ is negligible, that is,
\beq\label{eq:tznplusbeyond}
\limsup_{A_2 \to \infty}\limsup_{n \to \infty}\P\{  \tilde M_n^+(l_n^+, n) \ge u_n \} = 0.
\eeq
Therefore setting $A_1 = A$ and letting $A_2 \to \infty$ yield \eqref{eq:tmnpluscubelocal}.

We first argue why we can focus on \eqref{eq:tzij} instead when $j - i \asymp \log^3 n$. Note that \eqref{eq:oneujui} and \eqref{eq:mnplusbound} continue to hold when $j - i \asymp (\log n)^3$. Hence,
\beq
\tilde M_n^+(l_n^-, l_n^+) = \{1 + O_P(1/\sqrt{n})\}\tilde Z_n^+(l_n^-, l_n^+) + O_P(1/\sqrt{n}),
\eeq
which implies
\begin{align*}
\P\{\tilde Z_n^+(l_n^-, l_n^+) \le u_n(\tau - \eps)\} \le \P\{\tilde M_n^+(l_n^-, l_n^+) \le u_n(\tau)\} \le\P\{\tilde Z_n^+(l_n^-, l_n^+) \le u_n(\tau + \eps)\},
\end{align*}
for any $\eps > 0$. If we had established \eqref{eq:tznplus}, taking $\eps \to 0$ would yield \eqref{eq:tmnplusrestricted}. Now we turn to the mainstream of the proof.

\medskip
\noindent
{\sc Proof of \eqref{eq:local2}}. We will prove this following a similar strategy as in \citet{kabluchko2014limiting}. Necessary adjustments are still needed since \citet{kabluchko2014limiting} focused on $Z_{i, j}^+$ while we are dealing with $\tilde Z_{i, j}^+$. We will present the parts that need to be adjusted and refer to their results when nothing needs to be changed. 

First we work on $\cQ_n$. For any $\tau \in \mathbb R$ and $a \ge 0$, let $l_n = a(\log n)^3$ and define
\beq\label{eq:onedef2}
P_n(s) = \P\bigg(\frac{S_{l_n}^+}{\sqrt{l_n - S_{l_n}^+}} \ge u_n -\frac{s}{u_n}  \bigg).
\eeq
Define
\beq
b_n := \frac{u_n - s/u_n}{\sqrt{l_n}},
\eeq
 for ease of notation. Since $u_n^3 \propto \sqrt{l_n}$ and $b_n \sim \sqrt{2/a}/\log n\to 0$, for fixed $s > 0$ with sufficiently large $n$, with the transformation \eqref{eq:tzijtransform}, Lemma \ref{lem:moderatedev} and Taylor's expansion
\begin{align}
P_n(s) &= \P\bigg\{  \frac{S_{l_n}^+}{\sqrt{l_n}} \ge \sqrt{l_n}g^+(b_n)\bigg\}\nonumber\\ 
&\sim \frac{1}{\sqrt{2\pi}u_n}\exp\bigg\{  -\frac{(u_n - s/u_n)^2}{2}\frac{2I^+(g^+(b_n ))}{b_n^2}  \bigg\}\nonumber\\
&= \frac{1}{\sqrt{2\pi}u_n}\exp\bigg\{-\frac{(u_n - s/u_n)^2}{2}\bigg(1 - \frac{1}{3}b_n\bigg) + o(1)\bigg\}\nonumber\\
&\sim \frac{1}{2\sqrt{\pi}}e^{s + \frac{\sqrt{2}}{3} a^{-1/2}} \frac{e^{-\tau}\log n}{n} \label{eq:pns2}.
\end{align}
Recall that $\bbT_r(x, y)$ is defined in \eqref{T}. Define $q_n = (\log n)^2$. By the same techniques in the proof of Lemma \ref{lem:weakcontroltzijplus} we have
\begin{align*}
\cQ_n & = \P\bigg(\max_{(i, j) \in \bbT_{Bq_n}(x, x + l_n)} \tilde Z_{i, j}^+ \ge u_n\bigg) \\
&= \P\bigg[\max_{(i, j) \in \bbT_{Bq_n}(x, x + l_n)} \bigg\{S_j^+ - S_i^+ - (j - i)g^+\bigg(\frac{u_n}{j - i}\bigg)\bigg\} \ge 0\bigg]\\
&= \P\bigg[\max_{0 \le k_1, k_2 \le Bq_n} \bigg\{S_{k_1}^{(1)+} + S_{k_2}^{(2)+} -(l_n + k_1 + k_2)g^+\bigg(\frac{u_n}{l_n + k_1 + k_2}\bigg)\bigg\} + S_{l_n}^+ \ge 0\bigg]\\
&= P_n(0)\bigg\{  1 + \int_0^\infty G_n(s)d\nu_n(s)  \bigg\},
\end{align*}
where $P_n(s)$ defined in \eqref{eq:onedef2} is actually the probability distribution of $V_{l_n, u_n}$, defined in \eqref{eq:vludefn}. Therein
\begin{align*}
G_n(s) :=& \P\bigg[\max_{0 \le k_1, k_2 \le Bq_n} \bigg\{ S_{k_1}^{(1)+} + S_{k_2}^{(2)+} - (l_n + k_1 + k_2)g^+\bigg(\frac{u_n}{\sqrt{l_n + k_1 + k_2}}\bigg)\bigg\} \\
&~~~~~+ l_n \cdot g^+\bigg(\frac{u_n - s/u_n}{\sqrt{l_n}}\bigg) \ge 0\bigg], 
\end{align*}
and
\beq
\nu_n(\cdot) := P_n(\cdot)/P_n(0).
\eeq 
It is immediate that the first and second conditions in Lemma~\ref{lem:interchange} hold by directly mimicking the details in the proof of Lemma 4.3 in \citep{kabluchko2014limiting}, that is, for any fixed $s > 0$ and any sequence $s_n \to s$,
\beq
\lim_{n \to \infty}G_n(s_n) = \P( M_1 + M_2 \ge s ),
\eeq
and
\beq
\lim_{n \to \infty}\nu_n([0, s)) = \lim_{n \to \infty} \frac{P_n(s)}{P_n(0)} = e^s.
\eeq
$M_1$ and $M_2$ are independent copies with the same distribution as \beq
M  = \sup_{t \in [0, a^{-1}B]}  \{ \sqrt{2}W(t) - t \}, 
\eeq
where $W(t)$ is a standard Brownian motion (similar but more detailed arguments can be found in the proof of lemma 4.3 in \cite{kabluchko2011extremes}). To verify the third condition in Lemma~\ref{lem:interchange}, we need to bound the integral $ \int_0^\infty G_n(s)d\nu_n(s)$ from above. This can be immediately completed by using Lemma \ref{lem:weakcontroltzijplus}. Hence applying Lemma \ref{lem:interchange} completes the proof of \eqref{eq:local2}, where 
\beq
H(x) := \E\{\sup_{t \in [0, x]} e^{\sqrt{2}W(t) - t}\}\text{, }x > 0,
\eeq
therein. 

\medskip
\noindent
{\sc Proof of \eqref{eq:largelocalrate}}. Define $w_n = (\log n)^3$. For $z \in \bbZ$, define 
\beq
\bbJ_n(z) = \{(i, j) \in \bbI: z \le i < z + w_n, j - i \in [l_n^-, l_n^+]\}.
\eeq
To derive the rate of $\P(\max_{(i, j) \in \bbJ_n(z)} \tilde Z_{i, j}^+ \ge u_n )$, by translation invariance we may take $z = 0$.
Let $\delta_n$ be a real sequence satisfying  $\delta_n =o (w_n)$ and $q_n = o(\delta_n)$, e.g. $\delta_n = (\log n)^{2.5}$. For $B \in \bbN$, we introduce the following two-dimensional discrete grids with mesh size $Bq_n$:
\beq
\cJ_n(B) = \{(x, y) \in  Bq_n\bbZ \times Bq_n\bbZ : x \in [-\delta_n, w_n + \delta_n], y - x \in [l_n^- - \delta_n, l_n^+ + \delta_n]\},
\eeq
\beq
\cJ_n'(B) = \{(x, y) \in  Bq_n\bbZ \times Bq_n\bbZ : x \in [\delta_n, w_n -\delta_n], y - x \in [l_n^- + \delta_n, l_n^+ - \delta_n]\}.
\eeq
By Bonferroni inequality,
\beq
S_n'(B) - S_n''(B) \le \P\bigg(  \max_{(i, j) \in \bbJ_n(0)} \tilde Z_{i, j}^+ \ge u_n  \bigg) \le S_n(B),
\eeq
where
\beq\label{eq:firstpart}
S_n(B) = \sum_{(x, y) \in \cJ_n(B)}\P\bigg(  \max_{(i, j) \in \bbT_{Bq_n}(x, y)} \tilde Z_{i, j}^+ \ge u_n  \bigg),
\eeq
\beq
S_n'(B) = \sum_{(x, y) \in \cJ_n'(B)}\P\bigg(  \max_{(i, j) \in \bbT_{Bq_n}(x, y)} \tilde Z_{i, j}^+ \ge u_n  \bigg),
\eeq
and
\beq\label{eq:snppb}
S_n''(B) = \sum_{(x_1, y_1), (x_2, y_2) }\P\bigg(  \max_{(i, j) \in \bbT_{Bq_n}(x_1, y_1)} \tilde Z_{i, j}^+ \ge u_n, \max_{(i, j) \in \bbT_{Bq_n}(x_2, y_2)} \tilde Z_{i, j}^+ \ge u_n  \bigg),
\eeq
where the summation is taken over $(x_1, y_1) \neq  (x_2, y_2) \in  \cJ_n'(B)$. As long as we can show
\beq\label{eq:snbupper}
\lim_{B \to \infty}\limsup_{n \to \infty}nw_n^{-1}S_n(B) \le e^{-\tau}\int_{A_1}^{A_2} \Lambda_1(a)da,
\eeq
\beq\label{eq:snpblower}
\lim_{B \to \infty}\liminf_{n \to \infty}nw_n^{-1}S_n'(B) \ge e^{-\tau}\int_{A_1}^{A_2} \Lambda_1(a)da, 
\eeq
and
\beq\label{eq:snppbzero}
\lim_{B \to \infty}\limsup_{n \to \infty}nw_n^{-1}S_n''(B) = 0,
\eeq
\eqref{eq:largelocalrate} will follow immediately. 
The proof of \eqref{eq:snpblower} is almost identical to that of \eqref{eq:snbupper}, so we only focus on proving \eqref{eq:snbupper} based on the dominated convergence theorem. Define
\beq
\cL_n(B) = Bq_n \bbZ\cap [l_n^- -\delta_n, l_n^+ +\delta_n],
\eeq
such that $|\cL_n(B)| \sim (A_2 - A_1) (\log n)/B$. Since the probability on the right-hand side of \eqref{eq:firstpart} depends only on $l := y - x$, by translation invariance we have
\beq
S_n(B) \le \frac{w_n + \delta_n}{Bq_n} \sum_{l \in \cL_n(B)}\P\bigg(  \max_{(i, j) \in T_{Bq_n}(0, l)} \tilde Z_{i, j}^+ \ge u_n  \bigg).
\eeq
Next we apply \eqref{eq:local2} to bound each probability with $l$ fixed and replace $ (B q_n)^{-1} \sum_{l \in \cL_n(B)}$ by an integral as $n\to \infty$. By \eqref{eq:local2} and \eqref{eq:pns2},
\beq
\lambda_{n,B}(a) : = \frac{n}{\log n}\P\bigg(  \max_{(i,j) \in T_{Bq_n}(0, l_{n,B}(a))} \tilde Z_{i,j}^+ \ge u_n  \bigg) \to \frac{1}{2\sqrt{\pi}}e^{ \frac{\sqrt{2}}{3} a^{-1/2} - \tau}\bigg\{  1 + H^2\bigg(\frac{B}{a}\bigg)  \bigg\},
\eeq
as $n \to \infty$, where
\beq
l_{n,B}(a) = \max\{l \in Bq_n\bbZ: l \le a w_n \}. 
\eeq
The function $\lambda_{n,B}(a)$ takes constant values on sub-intervals with widths $Bq_n/w_n = B/\log n$. It follows that
\beq
S_n(B) \le \frac{w_n + \delta_n}{B^2n} \sum_{l \in \cL_n(B)} \frac{B\lambda_{n,B}(a)}{\log n} = \frac{w_n + \delta_n}{B^2 n}\int_{A_1 - \frac{2\delta_n}{w_n}}^{A_2 + \frac{2\delta_n}{w_n}}\lambda_{n,B}(a)da.
\eeq
From Lemma \ref{lem:weakcontroltzijplus}, 
we can upper bound the integrand $\lambda_{n,B}(a)$ by an integrable function that is independent of $n$. Therefore, applying Fatou's lemma on $\limsup$ gives
\beq\label{eq:domchange}
\limsup_{n \to \infty}nw_n^{-1}S_n(B) \le e^{-\tau}\int_{A_1}^{A_2} \frac{a^2\Lambda_1(a)}{B^2}\bigg\{ 1 + H^2\bigg(\frac{B}{a}\bigg)\bigg\}da.
\eeq
This result holds for any $B \in \bbN$. Note that $\lim_{B \to \infty} H(B)/B = 1$. Letting $B\to \infty$, we arrive at \eqref{eq:snbupper}. 

To prove \eqref{eq:snppbzero}, we bound $S_n''(B)$ by similar quantities of $Z_{i, j}^+$, which allows us to use results in \citet{kabluchko2014limiting} immediately. For any interval $(x, y)$ define the event
\beq
E_n(x, y) = \bigg\{\max_{(i,j) \in \bbT_{Bq_n}(x, y)} \tilde Z_{i,j}^+ \ge u_n\bigg\}.
\eeq
Note that
\beq
\frac{g^+(x)}{x} = \frac{1}{2}(\sqrt{x^2 + 4} - x)  \ge 1 - \frac{x}{2}, \text{ when }x \to 0.
\eeq
When $y - x \propto (\log n)^3$, $u_n/(y - x) \propto 1/(\log n)$,
\begin{align*}
E_n(x, y) &= \bigg\{\max_{0 \le l_1, l_2 \le Bq_n} \bigg\{S_{y + l_2}^+ - S_{x - l_1}^+ - (y - x + l_1 + l_2)g^+\bigg(\frac{u_n}{\sqrt{y - x + l_1 + l_2}}\bigg)\bigg\} \ge 0  \bigg\}\\
&\subset \bigg\{\max_{0 \le l_1, l_2 \le Bq_n} \frac{S_{y + l_2}^+ - S_{x - l_1}^+}{\sqrt{y - x + l_1 + l_2}} \ge \sqrt{y - x + l_1 + l_2} g^+\bigg(\frac{u_n}{\sqrt{y - x + l_1 + l_2}}\bigg)  \bigg\}\\
&\subset \bigg\{\max_{(i,j) \in \bbT_{Bq_n}(x, y)} Z_{i,j}^+  \ge u_n(\tau)\bigg(1 - \frac{u_n}{2\sqrt{y - x + l_1 + l_2}}\bigg)\bigg\}\\
&\subset \bigg\{\max_{(i,j) \in \bbT_{Bq_n}(x, y)} Z_{i,j}^+  \ge u_n(\tau - 0.1)\bigg\}.
\end{align*}
Therefore,
\begin{align*}
&\P\{ E_n(i_1, j_1) \cap E_n(i_2, j_2) \} \\
&\le \P\bigg[\bigg\{\max_{(i,j) \in \bbT_{Bq_n}(i_1, j_1)} Z_{i,j}^+  \ge u_n(\tau - 0.1)\bigg\} \bigcap \bigg\{\max_{(i,j) \in \bbT_{Bq_n}(i_2, j_2)} Z_{i,j}^+  \ge u_n(\tau - 0.1)\bigg\}\bigg].
\end{align*}
This allows us to work on $Z_{i, j}^+$ instead. Directly applying Lemma 4.12, Lemma 4.14, Lemma 4.15 and Lemma 4.16 in \citep{kabluchko2014limiting} yields \eqref{eq:snppbzero}.

\medskip
\noindent
{\sc Proof of \eqref{eq:tznplus}}.  We will temporarily adopt the notations in \citet{arratia1989two}. Define 
\beq
I = \{\alpha \in \bbN: \alpha w_n \le n\},
\eeq
which implies $|I| \le n/w_n$. For any $\alpha \in I$, define
\beq
X_\alpha = \mathbbm{1}\{\max_{(i, j) \in \bbJ_n(\alpha w_n)} \tilde Z_{i, j}^+ \ge u_n \}, 
\eeq
\beq
p_\alpha = \P(X_\alpha),
\eeq
and
\beq
B_\alpha = \{\beta \in I: |(\beta - \alpha)w_n| \le l_n^+ + w_n\}.
\eeq
Hence $|B_\alpha| \le A_2 + 1$. To apply Theorem 1 in \citep{arratia1989two}, we need to show that 
\beq
b_1 := \sum_{\alpha \in I}\sum_{\beta \in B_\alpha} p_\alpha p_\beta, 
\eeq
\beq
b_2 := \sum_{\alpha \in I}\sum_{\alpha \neq \beta \in B_\alpha} p_{\alpha\beta}, \text{ where } p_{\alpha\beta} := \E(X_\alpha X_\beta),
\eeq
and 
\beq
b_3' := \sum_{\alpha \in I} s_\alpha'
\eeq
therein vanish as $n \to \infty$, where
\beq
s_\alpha' := \E\bigg|\E\bigg(X_\alpha - p_\alpha \Big|\sum_{\beta \in I - B_\alpha} X_\beta \bigg)\bigg|
\eeq 
By the definition of $B_\alpha$, $X_\alpha - p_\alpha$ and $\sum_{\beta \in I - B_\alpha} X_\beta$ are independent. Hence $s_\alpha'  = 0$, so is $b_3'$. It follows from \eqref{eq:largelocalrate} that
\beq
b_1 \sim |I||B_\alpha| p_\alpha p_\beta \to 0. 
\eeq
With slight modification on \eqref{eq:largelocalrate},
\beq
\P\bigg(\max_{(i, j) \in \bbJ_n(\alpha w_n) \cup \bbJ_n(\beta w_n)} \tilde Z_{i, j}^+ \ge u_n \bigg) \sim e^{-\tau}\frac{2w_n}{n}\int_{A_1}^{A_2} \Lambda_1(a)da.
\eeq
This, together with \eqref{eq:largelocalrate}, implies
\beq
p_{\alpha\beta} = \P\bigg(\max_{(i, j) \in \bbJ_n(\alpha w_n)} \tilde Z_{i, j}^+ \ge u_n, \max_{(i, j) \in \bbJ_n(\beta w_n)} \tilde Z_{i, j}^+ \ge u_n \bigg) = o\bigg(\frac{w_n}{n}\bigg).
\eeq
Thus, 
\beq
b_2 \le |I||B_\alpha|\max_{\alpha \neq \beta} p_{\alpha\beta} \to 0. 
\eeq
Now, by Theorem 1 in \citep{arratia1989two},
\beq
\lim_{n\to \infty}\P\{\tilde Z_n^+(l_n^-, l_n^+) \le u_n\} = \lim_{n\to \infty}\P\bigg(\sum_{\alpha \in I}X_\alpha = 0\bigg) = e^{-\lambda},
\eeq
where
\beq
\lambda = \sum_{\alpha \in I}p_\alpha \to e^{-\tau}\int_{A_1}^{A_2} \Lambda_1(a)da. 
\eeq
Therefore,
\beq
\lim_{n\to \infty}\P\{\tilde M_n^+(l_n^-, l_n^+) \le u_n\} = \exp\bigg(-e^{-\tau}\int_{A_1}^{A_2} \Lambda_1(a)da\bigg),
\eeq
by the statement in the beginning of our proof.

\medskip
\noindent
{\sc Proof of \eqref{eq:tznplusbeyond}}. Divide $(l_n^+, n]$ into $(l_n^+, (\log n)^4]$, $((\log n)^4, n - (\log n)^4]$ and $(n - (\log n)^4, n]$. Within the first region, for any $k \in \bbN$, any pair $(i, j)$ with length $2^k (\log n)^3 \le j - i \le 2^{k + 1} (\log n)^3$ can be covered by the union of at most $2^{-k}n/\log n$ disjoint discrete squares of the form $\bbT_{2^k(\log n)^2}(x, x + j - i)$. By \eqref{eq:orderdiffup2}, 
\beq
1 - (\uj - \ui) \ge 1 -  1.1(\log n)^4/n,
\eeq
with probability tending to one. With these facts, by the union bound and Lemma \ref{lem:weakcontroltzijplus}, 
\begin{align*}
&\P\{\tilde M_n^+(l_n^+, (\log n)^4) \ge u_n\} \\
&\le \P\bigg\{  \max_{k: \log_2 A_2 \le k \le \log_2 (\log n)}\tilde M_n^+(2^k(\log n)^3, 2^{k + 1}(\log n)^3) \ge u_n \bigg\} \\
&\le \P\bigg\{  \max_{k:  \log_2 A_2 \le k \le \log_2 (\log n)}\tilde Z_n^+(2^k(\log n)^3, 2^{k + 1}(\log n)^3) \ge u_n(\tau - 0.1) \bigg\} \\
&\le \sum_{k \ge \log_2 A_2}2^{-k}\frac{n}{\log n}\P\bigg\{ \max_{(i, j) \in T_{2^k(\log n)^2}(0, 2^{k + 1}(\log n)^3)} \tilde Z_{i, j}^+ \ge u_n(\tau - 0.1)\bigg\} + \P(\Omega_n^c) \\
&\le C\sum_{k \ge \log_2 A_2}2^{-k}  + \P(\Omega_n^c).
\end{align*}
Taking $\limsup_{n \to \infty}$ and letting $A_2 \to \infty$ gives the desired result.

In the meantime, on $((\log n)^4, n - (\log n)^4]$, a finer examination of \eqref{eq:orderdiffup} and \eqref{eq:orderdifflow} yields
\beq
\bigg|\frac{n(\uj - \ui)}{j - i} - 1\bigg| = O_p\bigg(\frac{1}{\log n}\bigg).
\eeq
\eqref{eq:orderdiffup2} and \eqref{eq:orderdifflow2} imply
\beq
\bigg|\frac{1 - (\uj - \ui)}{1 - (j - i)/n} - 1\bigg| = O_p\bigg(\frac{1}{\log n}\bigg).
\eeq
Therefore,
\begin{align*}
\P\{\tilde M_n^+((\log n)^4, n - (\log n)^4) \ge u_n\} \le \P\{M_n^+(l_n^+, (\log n)^4) \ge u_n(\tau - 0.1)\} \to 0,
\end{align*}
by Theorem \ref{thm:mnplus}.

The proof of the region $(n - (\log n)^4, n]$ is immediate by following the proof for \eqref{eq:tmnpluslast}, which we omit here.
  \qed

\subsection{Proof of Theorem \ref{thm:tmnminus}}
Define
\beq
\tilde Z_{i, j}^- := \frac{S_j^- - S_i^-}{\sqrt{j - i + S_j^- - S_i^-}},
\eeq
and
\beq
g^-(a) := \frac{1}{2}(a\sqrt{a^2 + 4} + a^2).
\eeq
\beq\label{eq:iminusgminusbound}
I^-(g^-(s)) \ge s^2/2.
\eeq 
The theorem follows immediately after showing that
\beq
\limsup_{n \to \infty}\P(\tilde M_n^- \ge \eps\sqrt{n}) = 0,
\eeq
for any $\eps > 0$. This can be proved similarly by dividing the regions, transforming the statistic $\tilde M_{i, j}^-$ into $\tilde Z_{i, j}^-$,
combined with \eqref{eq:iminusgminusbound}. We omit the detail here.

\section*{Acknowledgements}
Andrew Ying was partially supported by the Achievement Rewards for College Scientists (ARCS) Scholarship. The authors strongly thanks for Professor Ery Arias-Castro for building up the introduction and providing the motivation. The authors would also like to thank for Professor Qi-Man Shao, Professor Xiao Fang, Professor Hock Peng Chan, and Professor David O.~Siegmund for stimulating discussions and pointers to the literature.

\bibliographystyle{chicago}
\bibliography{ref}
\end{document}